\documentclass{amsart}

\usepackage[margin=1.4in]{geometry}

\usepackage{graphicx} 
\usepackage{enumerate}  
\usepackage{hyperref}   
\usepackage[square,numbers]{natbib}     
\usepackage{amsthm}     
\usepackage{amsmath}    
\usepackage{amssymb}    
\usepackage{amsxtra}    
\usepackage{eucal}      
\usepackage{comment}
\usepackage{tikz}

\usetikzlibrary{matrix}
\usetikzlibrary{patterns, shapes}
\usetikzlibrary{arrows}
\usetikzlibrary{calc,3d}
\usetikzlibrary{decorations,decorations.pathmorphing, decorations.pathreplacing}
\usetikzlibrary{through}
\tikzset{every picture/.style={baseline=-.65ex}}
\tikzset{ext/.style={circle, draw,inner sep=1pt},int/.style={circle,draw,fill,inner sep=1pt},nil/.style={inner sep=1pt}}
\tikzset{exte/.style={circle, draw,inner sep=3pt},inte/.style={circle,draw,fill,inner sep=3pt}}
\tikzset{diagram/.style={matrix of math nodes, row sep=3em, column sep=2.5em, text height=1.5ex, text depth=0.25ex}}
\tikzset{diagram2/.style={matrix of math nodes, row sep=0.5em, column sep=0.5em, text height=1.5ex, text depth=0.25ex}}
\tikzset{every loop/.style={draw}}

\usepackage{tikz-cd}
\usepackage{ytableau} 

\theoremstyle{plain}

\newtheorem{thm}{Theorem}[section]

\newtheorem{prop}[thm]{Proposition}

\newtheorem{lemma}[thm]{Lemma}
\newtheorem{cor}[thm]{Corollary}

\theoremstyle{definition}
\newtheorem{rem}[thm]{Remark}

\newtheorem{defi}[thm]{Definition}
\newtheorem{ex}[thm]{Example}

\newcommand{\Q}{\mathbb{Q}}

\newcommand{\GC}{\mathsf{GC}}
\newcommand{\HGC}{\mathsf{HGC}}

\DeclareMathOperator{\Hom}{Hom}
\DeclareMathOperator{\Map}{Map}
\newcommand{\lp}{\text{-loop}}

\DeclareMathOperator{\ad}{ad}
\newcommand{\Com}{\mathrm{Com}}

\newcommand{\Z}{\mathbb{Z}}

\newcommand{\grt}{\mathfrak{grt}}
\newcommand{\ogrt}{\mathfrak{ogrt}}
\usepackage{todonotes}

\newcommand{\Lie}{\mathrm{Lie}}

\newcommand{\ot}{{\mathfrak{ot}}}

\newcommand{\Der}{\operatorname{Der}}

\theoremstyle{remark}

\newcommand{\Pois}{\mathsf{Pois}}

\title[An odd version of the Grothendieck--Teichmüller Lie algebra]{An odd version of the Grothendieck--Teichmüller Lie algebra and the odd Deligne--Drinfeld conjecture}
\author{Florian Naef}
\address{School of Mathematics, Trinity College, Dublin 2, Ireland }
\author{Thomas Willwacher }
\address{Department of Mathematics, ETH Zurich, Rämistrasse 101, 8092 Zurich, Switzerland}
\date{}

\begin{document}

\begin{abstract}
    We study an ``odd'' version of the Grothendieck--Teichmüller Lie algebra that is obtained by replacing free Lie algebras in even generators by free graded Lie algebras in odd generators in the classical definition.
    These odd variants are related to the ``odd'' Kontsevich graph complex in the same way as the classical Grothendieck--Teichmüller group is related to the even Kontsevich graph complex. We show an odd analog of the Deligne--Drinfeld conjecture, namely that the odd Grothendieck--Teichmüller Lie algebra is isomorphic to a free graded Lie algebra in one odd generator of weight 3. Thus we in particular compute the odd graph cohomology in the bottom cohomological degree, showing that it is mostly zero.
\end{abstract}

\maketitle

\section{Introduction}
The Grothendieck--Teichmüller Lie algebra $\grt_1:=\grt_1(\Q)$ is an infinite-dimensional Lie algebra introduced by Drinfeld \cite{Drinfeld}.
It has been studied extensively in the last decades. The Deligne--Drinfeld conjecture states that $\grt_1\cong \Lie(\sigma_3,\sigma_5,\dots)$ is identified with a free Lie algebra in infinitely many generators. This conjecture has recently been proven by OpenAI \cite{OpenAI}, extending earlier work by Brown \cite{Brown}, so that $\grt_1$ is today fully known.

The Grothendieck--Teichmüller Lie algebra appears in a multitude of a priori unrelated problems in algebra, topology and mathematical physics. For example, it has been shown by the second author \cite{WillwacherGrt} that $\grt_1$ computes the zeroth cohomology of the Kontsevich graph complex $\GC_2$, $H^0(\GC_2)\cong \grt_1$. Since it is known that $H^{<0}(\GC_2)=0$, see \cite{WillwacherGrt}, this result can be seen as the full computation of the graph cohomology in the bottom degree.
Now there exist two variants of the Kontsevich graph complex, the even variant $\GC_2$ and an odd variant $\GC_1$, that also has a multitude of applications. For example, the homotopy groups of the homotopy automorphism groups of the rational $E_n$-operad may be fully expressed through $H(\GC_2)$ if $n\geq 2$ is even and through $H(\GC_1)$ if $n\geq 3$ is odd, see \cite[Corollary~5]{FTW17}. So there is a significant interest in computing $H(\GC_1)$ as well. 
In particular, one may ask whether the techniques that enabled computing the bottom degree cohomology of $\GC_2$ can be transferred to the odd setting to compute the bottom degree cohomology of $\GC_1$ as well.
In this paper we give an affirmative answer to this question.

{\bf The odd Grothendieck--Teichmüller Lie algebra and main results}
Let $\ot_n$ be the odd Drinfeld--Kohno Lie algebra. That is, $\ot_n$ is the graded Lie algebra generated by symbols $t_{ij}$, $1\leq i\neq j\leq n$ of cohomological degree 1 and weight 1, with the following relations:
\begin{align*}
t_{ij}+t_{ji}&=0 \\
    [t_{ij}, t_{kl}] &= 0 &\text{for $i,j,k,l$ pairwise distinct} \\
    [t_{ij}, t_{ik}+t_{jk}] &= 0 & \text{for $i,j,k$ pairwise distinct}.
\end{align*}
Introduce two variables $x,y$ of cohomological degree 1 and weight 1,
and let $\Lie(x,y)$ be the free graded Lie algebra in $x$ and $y$. Note that in the free graded Lie algebra the antisymmetry relation of the Lie bracket is 
\[
[u,v] = -(-1)^{|u||v|} [v,u]
\]
on homogeneous elements $u,v\in \Lie(x,y)$ of cohomological degrees $|u|$ and $|v|$, respectively. In particular, $[x,x],[y,y]\neq 0$ and the free Lie algebra in a single odd generator is two-dimensional.
For an odd element $u$, we write $u^2=\frac12[u,u]$, in agreement with its square in the universal enveloping algebra. 

\begin{defi}
    The odd Grothendieck--Teichmüller Lie algebra $\ogrt_1\subset \Lie(x,y)$ consists of $\psi(x,y)\in \Lie(x,y)$ satisfying the following system of homogeneous linear equations 
    \begin{align}
\psi(x,y)+\psi(-y,-x)&=0,\label{eq:oantisymmetry}
\qquad \qquad\text{(antisymmetry)}
\\
\psi(t_{12},t_{23})+\psi(t_{23},t_{31})+\psi(t_{31},t_{12})&=0,\label{eq:othree-cycle}
\qquad \qquad\text{(3-cycle or hexagon)}
\\
\psi(t_{12},t_{23}+t_{24})+\psi(t_{13}+t_{23},t_{34})
 &=\psi(t_{23},t_{34})+\psi(t_{12}+t_{13},t_{24}+t_{34})
 \notag\\
 &\qquad
   +\psi(t_{12},t_{23}), \qquad\text{(pentagon),}
   \label{eq:opentagon}
\end{align}
where \eqref{eq:othree-cycle} takes values in $\ot_3$ and \eqref{eq:opentagon} takes values in $\ot_4$.
\end{defi}

For a homogeneous Lie polynomial $f$ let $\partial_f$ be the derivation
of parity $|f|$ determined by
\begin{equation}\label{eq:ihara-action}
 \partial_f x=0,\qquad \partial_f y=-[f,y].
\end{equation}
We use the graded Ihara bracket
\begin{equation}\label{eq:ihara}
 \{f,g\}=[f,g]+\partial_f g-(-1)^{|f||g|}\partial_g f.
\end{equation}
This bracket endows $\ogrt_1$ with the structure of a graded Lie algebra, as we will see. It is graded by weight or equivalently cohomological degree, and we denote the part of weight (or degree) $k$ by $\ogrt_1^{(k)}\subset \ogrt_1$.

\begin{ex}\label{ex:lowweight}
In weight 1 the only solution of \eqref{eq:oantisymmetry} is $\psi=\alpha (x+y)$ for $\alpha\in \Q$. But then \eqref{eq:othree-cycle} implies $\alpha=0$, so that 
\[
\ogrt_1^{(1)}=0.
\]
In weight 2, \eqref{eq:oantisymmetry} has the general solution $\psi=\alpha([x,x]-[y,y])$. Then \eqref{eq:othree-cycle} is automatically satisfied. Inserting this $\psi$ into \eqref{eq:opentagon} and projecting the resulting equation to $\ot_2$ by forgetting strands $3$ and $4$ we obtain
\[
\alpha [t_{12},t_{12}] + 0 = 0 + \alpha [t_{12},t_{12}] + \alpha [t_{12},t_{12}].
\]
Hence $\alpha=0$, so that
\[
\ogrt_1^{(2)}=0
\]
as well.
The first non-trivial element arises in weight 3. Concretely, one checks that
\begin{equation}\label{eq:sigma}
 \sigma_3=[x+y,[x,y]]
\end{equation}
is an element of $\ogrt_1$ of weight and degree 3. Its self-bracket of degree 6
\begin{equation}\label{eq:s3s3}
\begin{aligned}
\{\sigma_3,\sigma_3\}={}&
\big[[x,[x,y]],[x,[x,y]]\big]-\big[[y,[y,x]],[y,[y,x]]\big]-\frac14\big[[x,x],[[x,x],[y,y]]\big]\\
&-\frac14\big[[y,y],[[x,x],[y,y]]\big]+\frac12\big[[x,y],[[x,x],[y,y]]\big]
\end{aligned}
\end{equation}
is an element of $\ogrt_1^{(6)}$ as well, as one may check by explicit verification of the defining equations \eqref{eq:oantisymmetry}-\eqref{eq:opentagon}. It can be seen to be non-zero by considering the coefficient of the word $xxyxxy$.
\end{ex}

Our main result states that these two elements $\sigma_3,\{\sigma_3,\sigma_3\}$ above already span $\ogrt_1$.

\begin{thm}[Odd analog of the Deligne--Drinfeld conjecture]\label{thm:main}
The map of graded Lie algebras 
\[
 \Lie(\sigma_3) = \Q\sigma_3\oplus\Q\{\sigma_3,\sigma_3\}\longrightarrow \ogrt_1
\]
sending $\sigma_3$ to the degree 3 element \eqref{eq:sigma} is an isomorphism. Thus
\[
 \dim\ogrt_1^{(n)}=
 \begin{cases}1,&n=3,6,\\0,&n\ne3,6.\end{cases}
\]
\end{thm}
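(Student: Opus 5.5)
We argue in two halves: the lower bound $\dim\ogrt_1^{(3)},\dim\ogrt_1^{(6)}\ge1$, and the upper bound $\ogrt_1^{(n)}=0$ for $n\ne 3,6$ together with $\dim\ogrt_1^{(3)},\dim\ogrt_1^{(6)}\le 1$.

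\textbf{Preliminaries and lower bound.} First we verify that the Ihara bracket \eqref{eq:ihara} preserves $\ogrt_1$, so that $\ogrt_1$ is a graded Lie algebra. This follows the even case. One identifies $\psi\in\ogrt_1$ with a derivation of $\ot_n$ (or of the odd braid operad) given by $t_{12}\mapsto 0$, $t_{23}\mapsto[\psi(t_{12},t_{23}),t_{23}]$, extended via the cabling/operadic structure. The equations \eqref{eq:oantisymmetry}--\eqref{eq:opentagon} are then exactly the conditions for this to be an operadic derivation, and the Ihara bracket becomes the commutator of derivations, with the Koszul signs tracked.

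Next, $\Lie(\sigma_3)$, the free graded Lie algebra on one odd generator, is spanned by $\sigma_3$ and $\{\sigma_3,\sigma_3\}$. Both elements are nonzero and lie in $\ogrt_1$ by Example~\ref{ex:lowweight}. The map is therefore injective, and the lower bound holds.

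\textbf{Upper bound via graph complexes.} I would not attack the pentagon directly. Instead I would use the odd analogue of the second author's theorem $H^0(\GC_2)\cong\grt_1$: an injection, compatible with weights/loop orders, from $\ogrt_1$ into the bottom-degree cohomology of $\GC_1$ (or a hairy variant such as $\HGC_{1,1}$ or $\HGC_{0,1}$). The steps are:
\begin{enumerate}[(1)]
\item Realize $\ogrt_1$ as (homotopy) derivations of the odd Drinfeld--Kohno operad, or of its Koszul-dual Gerstenhaber/Poisson-type operad. Compare, through Kontsevich formality for $E_n$ with $n$ odd, with $\GC_1$ acting on the graph operad.
\item Show that $\ogrt_1^{(k)}$ embeds into the graph cohomology in the bottom degree at loop order $k$.
\end{enumerate}
The upper bound then reduces to showing that the relevant bottom-degree graph cohomology is concentrated in loop orders $3$ and $6$, where it is one-dimensional. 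For odd graphs this bottom degree consists of trivalent graphs in the appropriate shifted sense.

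\textbf{The main obstacle.} The hard step is proving that this bottom-degree graph cohomology is small. One option is to relate it to the odd "wheel/theta" classes: in $\GC_1$ the bottom-degree classes are known to be the theta graph and few others, and one would use known vanishing results for $\GC_1$ in degrees below the trivalent line. If that route fails, I would argue directly in $\Lie(x,y)$ instead:
\begin{enumerate}[(a)]
\item Project the pentagon onto $\ot_3\cong \Lie(t_{13},t_{23})\rtimes\langle t_{12}\rangle$, in the graded sense.
\item Show that the derivative $\partial_y\psi$ (the coefficient map in the free Lie algebra) satisfies a functional equation forcing $\psi$ to be determined by its depth-one part.
\item Show the depth-one part is controlled by a one-variable odd Lie algebra, which is at most two-dimensional; this is what pins the weights to $3$ and $6$.
\end{enumerate}
Making the depth filtration argument work with odd signs, in the analogue of Ihara's injectivity of depth-graded maps, is where I expect most of the difficulty to lie.
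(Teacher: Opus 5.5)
Your lower bound agrees with the paper: $\sigma_3$ and $\{\sigma_3,\sigma_3\}$ are checked directly to lie in $\ogrt_1$ and are nonzero. The paper does not even need an a priori proof that the Ihara bracket preserves $\ogrt_1$; closure follows from the final dimension count.

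The gap is the upper bound, which is the whole content of the theorem. Your main route is circular.
\begin{enumerate}
\item The diagonal cohomology $H^k(\GC_1^{k\lp})$ is not independently known. The available results are only the vanishing for $j<k$ and $j>2k-3$, which says nothing about $j=k$. The paper in fact computes $H^k(\GC_1^{k\lp})$ as a \emph{consequence} of this theorem.
\item For the one-hair complex your reduction is a tautology. By Lemma~\ref{lem:graph-harrison} and Theorem~\ref{thm:HarrisonHGC}, the bottom group $H^{k+1}(\HGC_{m,1}^{1\text{-hair},k\lp})$ simply is $\ogrt_1^{(k)}$.
\item For the hairless complex the known comparison goes the wrong way for you: $H^k(\GC_1^{k\lp})\hookrightarrow\ogrt_1^{(k)}$. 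This bounds graph cohomology by $\ogrt_1$, not conversely.
\end{enumerate}
Your picture of the graph side is also off. There is no result that the bottom classes are ``the theta graph and a few others'' (theta lives in loop order $2$). Bottom-degree graphs are also not trivalent: in $\GC_1$ a graph has degree $v-1$, so trivalent graphs sit in the top degree $2k-3$, whereas degree $k$ forces $v=k+1$ and $e=2k$.

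Your fallback fails concretely at step (b). Every term of \eqref{eq:s3s3} contains at least two $y$'s, so $\{\sigma_3,\sigma_3\}$ is a nonzero element of $\ogrt_1$ with vanishing depth-one part. Hence no functional equation can make $\psi$ determined by its depth-one part. For the same reason, the depth-one projection cannot be what pins the weights to $3$ and $6$.

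The missing idea is a mechanism that actually extracts information from the pentagon. In the paper it works as follows.
\begin{enumerate}
\item Write $\Lie(x,y)=\Q x\oplus\Lie(x^2,y,[x,y])$, using $x^2\neq 0$ for odd $x$, so that $\psi=P(x^2,y,[x,y])$.
\item Evaluate the pentagon through an explicit Lie morphism $\rho\colon\ot_4\to\wfH$ into a semidirect product with derivations $U,E,\delta,K$ and a central element $\kappa$. After setting $\eta=0$, this reconstructs $P$ from $f=P(0,B,C)$, which gives injectivity of $\Phi$.
\item Push this identity and its transposed analogue through two morphisms $\lambda_0,\lambda_1\colon\fH\to\fG$ built from square-zero derivations, and use antisymmetry. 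Writing $f=F(B^2,C,[B,C])$, this yields $F(u,v,w)=F(0,-v,w)$ and $F(u,v+2u,w)=F(-u,v,w)$.
\item These two identities force $f\in\Lie([B,C])$.
\end{enumerate}
A two-dimensional free Lie algebra on one odd generator does appear at the end. But it is generated by $[y,[x,y]]$ after discarding all terms involving $x^2$, not by a depth-one part.
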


The above result has an immediate application to the odd graph cohomology.
Analogously to the main result of \cite{WillwacherGrt} we show:
\begin{thm}\label{thm:main gc}
    There is a morphism of graded Lie algebras 
    \[
    \ogrt_1 \to H(\GC_1)
    \]
    that induces an isomorphism on the parts of degree and loop order $k\geq 0$, i.e., for $k\geq 0$ we have
    \[
    H^{k}(\GC_1^{k\lp}) \cong \ogrt_1^{(k)}.
    \]
    In particular, 
    \[
    H^{k}(\GC_1^{k\lp})
    =
    \begin{cases}
        \Q & \text{$k=3,6$} \\
        0 & \text{otherwise}
    \end{cases}.
    \]
\end{thm}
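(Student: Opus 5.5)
The plan is to adapt the argument of \cite{WillwacherGrt} to the odd setting, and then feed in Theorem~\ref{thm:main}. The bridge is the Kontsevich formality morphism, here the version for $E_n$ with $n$ odd, together with the $\ot_n$-analog of the Drinfeld--Kohno picture.

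\textbf{Step 1: the morphism.} Consider the deformation complex of the operad of odd infinitesimal braids, i.e.\ of the operad $\{\ot_n\}_n$ (or its completed universal enveloping algebra, the odd chord diagram operad). Derivations of this operad that act by $t_{12}\mapsto$ weight-$k$ elements form a complex. I would identify $\ogrt_1$ with the degree-$0$ cohomology of the appropriate operadic derivation complex, via the standard dictionary: antisymmetry, hexagon and pentagon become the cocycle conditions, and the Ihara bracket becomes the commutator of derivations. On the other side, $\GC_1$ acts on the operad of graphs $\mathsf{Graphs}_1$, which is quasi-isomorphic to $H(E_n)$ for odd $n$, and contains $\Pois$-type structure. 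Restricting both to the Drinfeld--Kohno part gives a zigzag in which both sides compute $\mathrm{Def}(\mathsf{hoPois}_n\to\Pois_n)$ for $n$ odd. Evaluating it yields a Lie map $\ogrt_1\to H(\GC_1)$.

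Concretely, I would construct it as in \cite{WillwacherGrt}. A graph cocycle $\gamma$ gives a derivation of the operad of internally connected graphs with external legs. Projecting onto trees in external vertices, with one loop-order counted as weight, produces an element of $\Lie(x,y)$ satisfying \eqref{eq:oantisymmetry}--\eqref{eq:opentagon}. This goes in the direction $H(\GC_1)\to\ogrt_1$, and I would prove it is an isomorphism on the diagonal parts.

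\textbf{Step 2: the diagonal spectral sequence.} Filter $\GC_1$ by loop order and consider the $k$-loop part in degree $k$. This is the bottom degree: by degree counting, trivalent-or-higher graphs with loop order $k$ have degree $\ge k$ in $\GC_1$, with equality exactly for trivalent graphs. Using the intermediate "hairy graph" complex $\HGC$ with one or two hairs, following \cite{WillwacherGrt}, I would show that $H^k(\GC_1^{k\lp})$ injects into the bottom-degree cohomology of the two-hair complex. I would then identify that cohomology with solutions of the pentagon inside Lie words, as in the even case, using the fact that internally trivalent trees modulo IHX give free Lie algebras, now free graded Lie algebras in odd generators. The sign conventions of $\GC_1$ (odd edges, even vertices) exactly produce odd generators $x,y$.

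\textbf{Step 3: conclude.} Having $H^k(\GC_1^{k\lp})\cong\ogrt_1^{(k)}$, apply Theorem~\ref{thm:main} to get $\Q$ for $k=3,6$ and $0$ otherwise. Compatibility with brackets follows because both Lie structures come from composition of derivations.

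The main obstacle is Step 2: the vanishing and acyclicity arguments in \cite{WillwacherGrt} use $H^{<0}(\GC_2)=0$ and the contractibility of certain hairy complexes. In the odd case, the parity of the degree shifts changes, so I must recheck that the relevant loop-order/degree bound forces the spectral sequence to degenerate on the diagonal. I would first try a direct combinatorial argument: identify the diagonal part of $\GC_1$ with trivalent graphs modulo IHX. Its top homology is then a space of "Jacobi diagrams", which I would match with $\ogrt_1$ through the pentagon-as-cocycle identification.
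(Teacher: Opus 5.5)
Your Step~2 rests on a false degree count, and the combinatorial route built on it fails. In $\GC_1$ a graph with $v$ vertices has degree $v-1$, and $3v\le 2e$ forces $v\le 2k-2$ at loop order $k$. So trivalent graphs sit in the \emph{top} degree $2k-3$, not in degree $k$. Graphs with at most $k$ vertices give chains below degree $k$, and the vanishing $H^{j}(\GC_1^{k\lp})=0$ for $j<k$ holds only on cohomology, not on chains. ``Trivalent graphs modulo IHX'' (Jacobi diagrams) therefore computes $H^{2k-3}(\GC_1^{k\lp})$, which is the degree you want only when $k=3$. Your Step~1 also misplaces $\ogrt_1$: because the $t_{ij}$ have degree $1$, $\ogrt_1$ is not a degree-$0$ cohomology group as in the even case; it lives on the diagonal where degree equals weight. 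The paper gets the bottom-degree concentration on the algebraic side instead, in three steps. First, the Turchin--Willwacher hair-adding map gives an injection $H^k(\GC_1^{k\lp})\hookrightarrow H^{k+1}(\HGC_{m,1}^{1\text{-hair},k\lp})$; this uses one hair, not two, and no comparable two-hair injection is available. Second, Turchin's theorem identifies the target with the cohomology of the normalized Harrison complex $C_k$ built from $N\ot_r$. Third, $\ot_2$ has nothing in weight $\ge 3$, so $C_k$ starts in arity $3$, degree $k+1$. There the cocycles are exactly the solutions of antisymmetry, hexagon and pentagon, i.e.\ $\ogrt_1^{(k)}$.

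Even after this repair, an injection into a hairy complex gives only the upper bound $\dim H^k(\GC_1^{k\lp})\le\dim\ogrt_1^{(k)}$. Nothing in your plan produces nonzero classes in $H(\GC_1)$, so the claimed isomorphism on the diagonal is unsupported. The deformation-theoretic zigzag of your Step~1 would have to be carried out for the full, all-hair deformation complex in this bottom degree to give surjectivity, and the proposal does not do this. The paper takes the lower bound from outside. The tetrahedron $K_4$ gives a nonzero class in $H^3(\GC_1^{3\lp})$, and Brown--Hu--Panzer's pairing $\langle[K_4,K_4],X\rangle=384$ shows that its self-bracket is nonzero in $H^6(\GC_1^{6\lp})$. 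Given Theorem~\ref{thm:main}, the Lie morphism is then simply $\sigma_3\mapsto[K_4]$, which extends uniquely because $\ogrt_1\cong\Lie(\sigma_3)$ is free on one odd generator. No comparison of the Ihara bracket with the graph bracket through derivations is needed. Finally, loop orders $k\le 2$ fall outside all of this and must be checked directly, as the paper does.
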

We also recall the known vanishing results \cite[Lemma~1.4]{Wil19}
\[
H^{j}(\GC_1^{k\lp}) = 0\qquad \text{ for $k\geq 3$ and either $j<k$ or $j>2k-3$}.
\]
Hence Theorem \ref{thm:main gc} computes the odd graph cohomology in the bottom degree, just above the vanishing range.
We also note that our result provides the first infinite family of known cohomology groups of the odd graph complex inside the (potentially) non-vanishing range. 

\subsection*{AI usage statement}
The main concepts of this work predate the arrival of useful AI tools.
However, we have used AI tools in all later stages of this project. Ideas of the proof below are furthermore variations of ideas used in the version \cite{WillwacherOpenAI} of the proof of the even Deligne--Drinfeld conjecture by OpenAI \cite{OpenAI}.

\section{Proof of the odd Deligne-Drinfeld conjecture}

\subsection{ABC coordinates}
\label{sec:coordinates}
When discussing $\ogrt_1$, the cohomological degrees of objects will always equal the weight.
In the following we hence always declare the weights only, with the tacit understanding that the cohomological degree is the same. Since we always work with graded Lie and associative algebras, we will often omit ``graded'' from the notation, and understand that all objects are graded by default.

Recall (see e.g. \cite[Example~8.7.5]{Neisendorfer}) that for graded generators $\{x, x_\alpha \}$ where $x$ is odd, the kernel of the projection $\Lie(x,x_\alpha) \to \Q x$ onto the abelian Lie algebra $\Q x$ is a free Lie algebra on the generators $x_\alpha, x^2, [x,x_\alpha]$.
Let $A,C$ be even generators of weight two and $B$ an odd generator of weight one. We thus have that
\begin{equation}\label{eq:iota}
\begin{gathered}
 \iota:\Lie\langle A,B,C\rangle\longrightarrow\Lie\langle x,y\rangle \\
 \iota(A)=x^2,\qquad \iota(B)=y,\qquad \iota(C)=[x,y].
 \end{gathered}
\end{equation}
defines an isomorphism onto $\mathcal K := \ker \left(\Lie(x,y) \to \Q x \right)$. 
In particular, we have a decomposition of graded vector spaces $\Lie(x,y) = \Q x \oplus \iota(\Lie(A,B,C))$.

Let $\iota^{-1}: \mathcal K \longrightarrow\Lie(A,B,C)$ denote the inverse of $\iota$.
For $\psi\in \ogrt_1$, Example~\ref{ex:lowweight} shows that $\psi \in \mathcal K$ and we can set $P=\iota^{-1}\psi \in \Lie(A,B,C)$, so that 
\[
\psi(x,y)=P(x^2,y,[x,y]).
\]
Then the antisymmetry relation \eqref{eq:oantisymmetry} becomes
\begin{equation}\label{eq:S-abc}
 P(x^2,y,[x,y])+P(y^2,-x,[x,y])=0
 \quad\text{in }\Lie(x,y).
\end{equation}
The pentagon equation is
\begin{align}
0={}&P(t_{12}^2,t_{23}+t_{24},[t_{12},t_{23}]+[t_{12},t_{24}])
 +P(t_{13}^2-[t_{12},t_{23}]+t_{23}^2,t_{34},[t_{13},t_{34}]+[t_{23},t_{34}])\notag\\
 &-P(t_{23}^2,t_{34},[t_{23},t_{34}])\label{eq:P-abc}-P(t_{12}^2+t_{13}^2-[t_{12},t_{23}],t_{24}+t_{34},[t_{12},t_{24}]+[t_{13},t_{34}])\\
 &-P(t_{12}^2,t_{23},[t_{12},t_{23}]).\notag
\end{align}

\subsection{The map \texorpdfstring{$\Phi$}{Phi}}
Define the morphism of graded vector spaces 
\begin{gather*}
    \Phi\colon \ogrt_1 \longrightarrow \Lie(B,C), \qquad
    \psi \longmapsto (\iota^{-1}\psi)(0,B,C).
\end{gather*}

\begin{thm}\label{thm:inj}
    The morphism $\Phi$ is injective and its image is contained in the two-dimensional Lie subalgebra 
    \[
    \Lie([B,C]) \subset \Lie(B,C).
    \]
\end{thm}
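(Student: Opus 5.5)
The plan is to follow the strategy of the even case \cite{WillwacherOpenAI}: extract from the pentagon equation \eqref{eq:P-abc} a sequence of simpler \emph{linearized} equations, by passing to suitable quotients and gradings of $\ot_4$, and then read off constraints on $P=\iota^{-1}\psi$.

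\textbf{Grading by $A$.} Grade $\Lie(A,B,C)$ by the number of occurrences of $A$, and write $P=P_0+P_1+P_2+\cdots$, where $P_0=\Phi(\psi)$ is the $A$-free part.

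\textbf{Splitting of $\ot_4$.} Use the semidirect product decomposition
\[
\ot_4\cong \Lie(t_{14},t_{24},t_{34})\rtimes \ot_3 ,
\]
where the ideal is the free graded Lie algebra on the three odd generators $t_{i4}$. Grade $\ot_4$ by the number $d$ of generators $t_{i4}$ occurring, so that the component of degree $d$ lies in the ideal generated by the $t_{i4}$. Expand each of the five terms of \eqref{eq:P-abc} in this grading:
\begin{itemize}
\item The terms $P(t_{23}^2,t_{34},[t_{23},t_{34}])$ and $P(t_{12}^2,t_{23},[t_{12},t_{23}])$ are already sorted, since each argument is homogeneous in $d$.
\item In the first, second and fourth terms, the second argument $B$ is a sum of $t_{i4}$'s. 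In the first and fourth terms the $C$-argument is also linear in the $t_{i4}$; in the second term the $C$-argument has a summand $[t_{13},t_{34}]$ in which $t_{34}$ appears.
\item The $A$-arguments lie in $\ot_3$, i.e.\ have $d=0$.
\end{itemize}
Hence the degree-$d$ component of the pentagon involves only the part of $P$ of total $(B,C)$-degree $d$.

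\textbf{Injectivity.} Suppose $\Phi(\psi)=P_0=0$. First apply the antisymmetry \eqref{eq:S-abc}: it exchanges the roles of $x^2$ and $y^2$, so $P$ should lie in the ideal generated by both $x^2$ and $y^2$ (in the $x,y$ picture). Then look at the lowest $(B,C)$-degree component of the pentagon, which I expect to give a linear equation of the form
\[
\sum \pm\, P_{\min}(\text{$\ot_3$-arguments},\,\text{linear in } t_{i4}) = 0 ,
\]
where $P_{\min}$ is the lowest nonzero $(B,C)$-degree part of $P$. Since the target $\Lie(t_{14},t_{24},t_{34})$ is free and $\ot_3\cong \Lie(t_{13},t_{23})\rtimes\Q t_{12}$ acts explicitly on it, one can compare coefficients of, e.g., $t_{14}$ against $t_{24}+t_{34}$ to conclude $P_{\min}=0$. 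Inducting on this degree, combined with the antisymmetry, should force $P=0$.

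\textbf{Image in $\Lie([B,C])$.} This should follow from the top-$A$-degree-free part of the pentagon. Project \eqref{eq:P-abc} to the quotient of $\ot_4$ in which the squares $t_{ij}^2$ for $ij\subset\{1,2,3\}$ are killed as far as is consistent, so that only $P_0$ survives. The result is a functional equation for $Q:=P_0(B,C)$ in the free Lie algebra on $t_{i4}$ with $\ot_3$ acting. Substituting $B\mapsto t_{24}+t_{34}$ versus $t_{34}$, and $C\mapsto$ the corresponding brackets, yields additivity-type identities. The goal is to show these identities force $Q$ to depend on $B,C$ only through $[B,C]$. Since the free Lie algebra on the single odd element $[B,C]$ is two-dimensional, this gives $\Phi(\psi)\in\Q[B,C]\oplus\Q[[B,C],[B,C]]$.

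\textbf{Main obstacle.} The hard step is the last one: turning the projected pentagon into a clean linear condition on $Q$ that characterizes $\Lie([B,C])$. What I would try first is to differentiate with respect to one generator, i.e.\ take the part linear in $t_{14}$, which turns the equation into a derivation identity. Then I would use the elimination isomorphism \cite[Example~8.7.5]{Neisendorfer} once more, to rewrite $\Lie(B,C)$ as $\Q B\oplus\Lie(C,B^2,[B,C],\dots)$, and show that all generators other than $[B,C]$ must appear with zero coefficient.
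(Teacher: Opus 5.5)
There is a genuine gap, and it begins in the setup. The number of generators $t_{i4}$ is not a grading of $\ot_4$. The relation $[t_{14},t_{12}+t_{42}]=0$ gives $[t_{12},t_{14}]=[t_{14},t_{24}]$, using $[t_{12},t_{14}]=[t_{14},t_{12}]$ for odd elements. So in $\ot_4\cong\Lie(t_{14},t_{24},t_{34})\rtimes\ot_3$ the $\ot_3$-action raises your count, and you only have a filtration.

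Consequently your sorting of the terms of \eqref{eq:P-abc} is wrong:
\begin{itemize}
\item the $A$-arguments $t_{12}^2$, $(t_{13}+t_{23})^2$, $(t_{12}+t_{13})^2$ act nontrivially on the $t_{i4}$;
\item the first term has $B$-argument $t_{23}+t_{24}$, which has a count-zero summand;
\item the first term has $C$-argument $[t_{12},t_{23}]+[t_{12},t_{24}]$, with summands of count $0$ and $2$, since $[t_{12},t_{24}]=-[t_{24},t_{14}]$ (and likewise $[t_{13},t_{34}]=-[t_{34},t_{14}]$).
\end{itemize}
So no component of the pentagon isolates a single $(B,C)$-degree part of $P$. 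Passing to the associated graded would kill precisely the $\ot_3$-action you want to use afterwards. (Also, $\ot_3\cong\Lie(t_{13},t_{23})\rtimes\Lie(t_{12})$ with $\Lie(t_{12})=\Q t_{12}\oplus\Q t_{12}^2$; $\Q t_{12}$ alone is not a subalgebra.) Apart from this, your injectivity argument rests on an equation you \emph{expect} and an induction that \emph{should} work. No mechanism forcing $P=0$ from $P_0=0$ is actually exhibited.

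For the image, you explicitly leave the decisive step open, and the proposed quotient does not help. Killing $t_{12}^2,t_{13}^2,t_{23}^2$ together with the $A$-argument $(t_{13}+t_{23})^2=t_{13}^2-[t_{12},t_{23}]+t_{23}^2$ forces $[t_{12},t_{23}]=0$. Then the image of $\ot_3$ becomes abelian, and e.g.\ the fifth term degenerates to $P(0,t_{23},0)=0$.

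The missing idea is to make the $A$-arguments \emph{commute} with the $B$- and $C$-arguments rather than vanish. The paper achieves this with the morphism $\rho:\ot_4\to\wfH$ of Lemma~\ref{lem:oneext-representation}, in which e.g.\ $\rho(t_{12})^2=A+\kappa-[A,B]\eta$ with $\kappa$ central. Set $\eta=0$, and use that in weight $\geq3$ one has $P(a,b,c)=P(0,b,c)$ whenever the even element $a$ commutes with $b$ and $c$. The pentagon then collapses to the reconstruction formula $P(A,B,C)=f(E+B,K+C)-f(E,K-A)$ with $f=\Phi(\psi)$, which gives injectivity immediately.

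The image bound then requires further concrete specializations. The paper applies $\lambda_0,\lambda_1:\fH\to\Lie(z,c)\rtimes(\Q\partial\oplus\Q d)$ to this formula and to its analogue from the pentagon with strands $2,3$ exchanged, and combines this with antisymmetry. Writing $f(B,C)=F(B^2,C,[B,C])$, this yields $F(u,v,w)=F(0,-v,w)$ and $F(u,v+2u,w)=F(-u,v,w)$, which force $F$ to depend only on $w$. Your outline contains no substitute for either of these steps.
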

We will show this theorem in the following subsections. Theorem~\ref{thm:main} then follows immediately.
\begin{proof}[Proof of Theorem \ref{thm:main}]
  By Example \ref{ex:lowweight} the elements $\sigma_3$ and $\{\sigma_3,\sigma_3\}$ are in $\ogrt_1$.
The map $\Lie(\sigma_3) \to \ogrt_1$ is thus well-defined. It is furthermore injective, since the element $\{\sigma_3, \sigma_3\}$ is non-zero as explained in Example \ref{ex:lowweight}. The dimension of $\ogrt_1$ is at most (and thus exactly) two by Theorem~\ref{thm:inj} so that the map is an isomorphism. This also shows that $\ogrt_1$ is indeed closed under the Ihara bracket.
\end{proof} 

\subsection{A morphism from \texorpdfstring{$\ot_4$}{ot4}}
Define derivations $U,E,\delta,K$ of the free Lie algebra $\Lie( A,B,C)$ by the following assignments on generators
\begin{equation}\label{eq:deri def}
\begin{array}{c|ccc}
 &A&B&C\\\hline
 U&[A,B]&C-2B^2&[A,B]-[B,C]\\
 E&[A,B]&0&[A,B]\\
 \delta&0&-C&[B,C]\\
 K&[A,C]&[A,B]&[A,C]+[A,B^2].
\end{array}
\end{equation}
Here $U,E,\delta$ have degree one and $K$ has degree two. 
Let 
\[
X=\langle U,E,\delta\rangle\subset\Der(\Lie(A,B,C))
\]
be the Lie subalgebra of the Lie algebra of derivations of $\Lie(A,B,C)$ generated by $U,E$ and $\delta$.

\newcommand{\fH}{\mathcal H}
\newcommand{\wfH}{\widetilde{\fH}}

Let $\eta$ be an odd variable of degree $-1$ and let $\Lambda(\eta)=\Q\oplus \Q\eta$ be the free graded commutative algebra generated by $\eta$. Define the graded Lie algebra
\begin{equation}\label{eq:cone-algebra}
 \wfH=( \Lie(A,B,C)\otimes\Lambda(\eta))
       \rtimes(X\oplus\Q \partial_\eta),
\end{equation}
where the Lie bracket on $\Lie(A,B,C)\otimes\Lambda(\eta)$ is extended $\Lambda(\eta)$-linearly and $\partial_\eta$ is the derivative with respect to $\eta$.

\begin{lemma}\label{lem:oneext-representation}
There is a morphism of graded Lie algebras
$\rho:\ot_4\to\wfH$ given on generators by
\begin{equation}\label{eq:oneext-representation}
\begin{aligned}
 \rho(t_{12})&=U+B-C\eta,&
 \rho(t_{13})&=\partial_\eta+(C-B^2-A)\eta,\\
 \rho(t_{23})&=B-C\eta,&
 \rho(t_{24})&=E,\\
 \rho(t_{14})&=-\partial_\eta+\delta+(C-B^2)\eta,&
 \rho(t_{34})&=-A\eta.
\end{aligned}
\end{equation}
As usual, $\rho(t_{ji})=-\rho(t_{ij})$.
\end{lemma}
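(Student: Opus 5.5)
Since $\ot_4$ is presented by generators $t_{ij}$ and relations, it is enough to check that the proposed images satisfy the defining relations inside $\wfH$. First I make sure $\wfH$ is a graded Lie algebra. The derivations $U,E,\delta\in X$ and $\partial_\eta$ act on $\Lie(A,B,C)\otimes\Lambda(\eta)$: elements of $X$ act on the first factor and $\partial_\eta$ on the second. These actions graded-commute, since $[D,\partial_\eta]=0$ for $D\in X$, so $X\oplus\Q\partial_\eta$ is a direct sum of Lie algebras. Moreover $[\partial_\eta,\partial_\eta]=2\partial_\eta^2=0$.

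Next I check degrees. $U,E,\delta$ have degree $1$, and so does $\partial_\eta$, because $\eta$ has degree $-1$. The elements $B$, $C\eta$, $A\eta$ and $B^2\eta$ all have degree $1$. Hence every $\rho(t_{ij})$ is odd, as it must be.

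Antisymmetry $t_{ij}+t_{ji}=0$ holds by definition of $\rho$. The remaining relations are:
\begin{itemize}
\item the commutation relations $[\rho(t_{12}),\rho(t_{34})]=0$, $[\rho(t_{13}),\rho(t_{24})]=0$ and $[\rho(t_{14}),\rho(t_{23})]=0$;
\item for each of the four triples $\{i,j,k\}\subset\{1,2,3,4\}$, the three relations of the form $[t_{ij},t_{ik}+t_{jk}]=0$.
\end{itemize}
The relations for a fixed triple are not independent. Writing $S=t_{ij}+t_{ik}+t_{jk}$, they amount to saying that $S$ commutes with each of $t_{ij},t_{ik},t_{jk}$. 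Given antisymmetry and the relation for one pair, one relation per triple follows from the other two. So I check two relations per triple.

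The cheapest checks come first.
\begin{itemize}
\item $[\rho(t_{13}),\rho(t_{24})]=[\partial_\eta+(C-B^2-A)\eta,E]$. Here $E$ commutes with $\partial_\eta$, so this reduces to $-E(C-B^2-A)\,\eta$ up to sign. From the table, $E(C)=[A,B]$, $E(B)=0$ and $E(A)=[A,B]$, so $E(C-A-B^2)=0$.
\item $[\rho(t_{14}),\rho(t_{23})]$ expands into $[-\partial_\eta,B-C\eta]=\pm C$ and $\delta(B)-\delta(C)\eta=-C-[B,C]\eta$. It also contains the brackets $[(C-B^2)\eta,B]$ and $[(C-B^2)\eta,-C\eta]$, and the latter vanishes because $\eta^2=0$. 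With the Koszul signs handled carefully, the $C$ terms should cancel, and the $\eta$ terms $-[B,C]\eta+[C-B^2,B]\eta$ cancel since $[B^2,B]=0$.
\item $[\rho(t_{12}),\rho(t_{34})]=[U+B-C\eta,-A\eta]$ needs $U(A)=[A,B]$ to cancel against $[B,A]\eta$ (with signs), while $C\eta$ against $A\eta$ is zero.
\end{itemize}

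Then I do the triangle relations for $\{1,2,3\}$, $\{2,3,4\}$, $\{1,3,4\}$ and $\{1,2,4\}$. For each one I separate each bracket into its $\eta^0$ and $\eta^1$ components. Since $\eta^2=0$ and the only source of $\eta$-removal is $\partial_\eta$, each relation becomes two identities in $\Lie(A,B,C)$. These involve the values of $U,E,\delta$ on generators and their degree-2 brackets, such as $[U,U]=2U^2$, $[U,\delta]$ and $[E,E]$. For example, the triple $\{2,3,4\}$ requires $[B-C\eta,\,E-A\eta]$ to commute appropriately, which uses $E(B)=0$ and $E(C)=[A,B]$.

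Brackets of derivations are checked on the generators $A,B,C$. This is where $K$ may enter, as a name for a degree-2 bracket like $[U,E]$ or $[E,\delta]$, but it only appears as a recognition device and is not needed for the proof.

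The main obstacle is the triples involving both $t_{12}$ (which contains $U$) and $t_{14}$ (which contains $\delta$ and $-\partial_\eta$), that is $\{1,2,4\}$ and $\{1,2,3\}$. There the derivation-derivation brackets $[U,\delta]$, $[U,E]$ and $U^2$ have to be matched against Lie-polynomial terms, including the quadratic pieces $B^2$ and $[A,B^2]$, with odd Koszul signs throughout. I would organize these checks by evaluating every relevant derivation bracket on $A$, $B$ and $C$ first. The key identity to verify, for instance, is $U^2(B)=\tfrac12[U,U](B)$. Computing $U(C-2B^2)=[A,B]-[B,C]-2[C-2B^2,B]$ simplifies using $[B^2,B]=0$, and the resulting polynomial identities are then verified weight by weight.
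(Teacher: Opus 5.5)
Your strategy is the paper's: check the defining relations of $\ot_4$ directly on the proposed images, evaluating brackets of derivations on $A,B,C$. Your three commutation checks are essentially right, apart from one Koszul sign. In fact $[(C-B^2)\eta,B]=-[C-B^2,B]\eta=[B,C]\eta$. As you wrote it, $-[B,C]\eta+[C-B^2,B]\eta=-2[B,C]\eta$, which does not cancel.

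The real problem is the triangle relations. They carry the content of the lemma, you never compute them, and your plan for them is wrong in the odd setting. The relations for a triple are \emph{not} equivalent to $S=t_{ij}+t_{ik}+t_{jk}$ commuting with $t_{ij},t_{ik},t_{jk}$, because $[t_{ij},t_{ij}]=2t_{ij}^2\neq0$ for odd generators. Already in $\ot_3$ one has $[t_{23},t_{12}+t_{13}+t_{23}]=2t_{23}^2\neq0$. In $\widetilde{\mathcal H}$ one has $[\rho(t_{23}),\rho(t_{23})]=2B^2-2[B,C]\eta$, so checking $[\rho(t_{ij}),\rho(S)]=0$ would fail. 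Your conclusion that two relations per triple suffice is still true, but for a different reason. Odd elements satisfy $[u,v]=[v,u]$. Put $a=[t_{ij},t_{ik}]$, $b=[t_{ij},t_{jk}]$ and $c=[t_{ik},t_{jk}]$. Then the three relations read $a+b=0$, $a-c=0$, $b+c=0$, and the third follows from the first two. This is exactly the form of the paper's table: $[\rho(t_{ij}),\rho(t_{ik})]=[\rho(t_{ik}),\rho(t_{jk})]=-[\rho(t_{ij}),\rho(t_{jk})]$, with common values $-C+([A,B]+[B,C])\eta$, $-K+[A,B]\eta$, $A$ and $-[A,B]\eta$ for the triples $123$, $124$, $134$, $234$.

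The same slip makes you misidentify what has to be checked. No self-bracket occurs in the relations, so $U^2$, $[U,U]$ and $[E,E]$ play no role. Your ``key identity'' $U^2(B)=\tfrac12[U,U](B)$ is a tautology; the paper records $U^2=-\ad_h$ only for later use in Proposition~\ref{prop:reconstruction}. Triple $\{1,2,3\}$ contains no derivation--derivation bracket at all, since $\rho(t_{13})$ and $\rho(t_{23})$ contain no element of $X$.

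What is actually needed, with $h=C-B^2-A$, is the following.
\begin{enumerate}
\item[(i)] $Uh=Eh=\delta h=0$. This is used in triples $123$ and $134$ and in the $[t_{13},t_{24}]$ check.
\item[(ii)] In triple $124$, the $X$-components of $[\rho(t_{12}),\rho(t_{14})]$, $[\rho(t_{12}),\rho(t_{24})]$ and $[\rho(t_{14}),\rho(t_{24})]$ are $[U,\delta]$, $[U,E]$ and $[\delta,E]$. Since the semidirect product splits as a vector space, these must satisfy the relations among themselves. That is, $[U,\delta]=[E,\delta]=-[U,E]$, and one checks on $A,B,C$ that all three equal $-K$.
\item[(iii)] The $\eta$-parts in triple $124$ then reduce to $U(C-B^2)=E(C-B^2)=[A,B]$.
\end{enumerate}
With these identities each triangle bracket takes a few lines, but those lines are the proof and need to be written out.
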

\begin{proof}
Let 
$$
h=C-B^2-A.
$$
By evaluation on the generators $A,B,C$ one shows
\begin{equation}\label{eq:derivation-identities}
\begin{gathered}
 Uh=Eh=\delta h=0,\qquad U^2=-\ad_h\\
 K=[U,E],\qquad [U,\delta]=[E,\delta]=-K.
\end{gathered}
\end{equation}
Using \eqref{eq:derivation-identities}, the three brackets
$[\rho(t_{12}),\rho(t_{34})]$, $[\rho(t_{13}),\rho(t_{24})]$ and $[\rho(t_{14}),\rho(t_{23})]$ are zero.
The other brackets are given by the following table:
\[
\begin{array}{c|c}
 ijk &[\rho(t_{ij}),\rho(t_{ik})]=[\rho(t_{ik}),\rho(t_{jk})]=-[\rho(t_{ij}),\rho(t_{jk})]\\\hline
 123&-C+([A,B]+[B,C])\eta\\
 124&-K+[A,B]\eta\\
 134&A\\
 234&-[A,B]\eta.
\end{array}
\]
The table verifies all three-index
relations, and hence all defining relations of $\ot_4$.
\end{proof}

\subsection{Injectivity of \texorpdfstring{$\Phi$}{Phi}}
Let us also define the semidirect product
\begin{equation}\label{eq:H}
 \fH=\Lie(A,B,C)\rtimes\Lie(e,k),\qquad |e|=1,\quad |k|=2,
\end{equation}
where the free Lie algebra $\Lie(e,k)$ acts on $\Lie(A,B,C)$ by $e\mapsto E$
and $k\mapsto K$, see \eqref{eq:deri def}.

\begin{prop}\label{prop:reconstruction}
For $\psi\in\ogrt_1$, write $P=\iota^{-1}\psi$ and
$f=P(0,B,C)$. Then in $\fH$ we have the equation
\begin{equation}\label{eq:oneext-reconstruction}
 P(A,B,C) =  f(e+B,k+C)-f(e,k-A),
\end{equation}
where both sides take values in $\Lie(A,B,C)\subset \fH$.
In particular, $\Phi$ is injective.
\end{prop}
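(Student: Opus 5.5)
Apply the Lie algebra morphism $\rho\colon\ot_4\to\wfH$ of Lemma~\ref{lem:oneext-representation} to the pentagon equation \eqref{eq:opentagon}, written in the ABC form \eqref{eq:P-abc}, and read off \eqref{eq:oneext-reconstruction}. The representation is built so that each of the five terms of the pentagon becomes something explicit. Concretely, I compute the $\rho$-images of the arguments of the five $\psi$'s. For instance, $\rho(t_{23})=B-C\eta$ and $\rho(t_{34})=-A\eta$ generate, via $\psi(t_{23},t_{34})$, only terms linear in $\eta$ or of order $\eta^0$. Since $\eta^2=0$, each $\psi$-term reduces to a Taylor expansion to first order in $\eta$: $\psi(a_0+a_1\eta,\dots)=\psi(a_0,\dots)+\eta\cdot(\text{derivative term})$. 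The terms containing $U$, $E$, $\delta$, $\partial_\eta$ land in the semidirect product part.

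The key step is then to project the resulting identity in $\wfH$ suitably. I will take the component of $\eta$-degree zero lying in $\Lie(A,B,C)\rtimes X$. Here the brackets in the table of Lemma~\ref{lem:oneext-representation} give the substitutions. The pairs $(t_{12},t_{23}+t_{24})$ map to $(U+B-C\eta,\,B-C\eta+E)$, so the $\eta^0$-part is $\psi(U+B,B+E)$, and in the ABC coordinates $x^2\mapsto$ its square and $[x,y]\mapsto$ the bracket. Using \eqref{eq:derivation-identities} (notably $U^2=-\ad_h$, $[U,E]=K$), the variables $(x^2,y,[x,y])$ are sent to combinations such as $(-h, \dots)$, $(E+B, K+C)$, or $(0,\dots)$. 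The aim is to arrange that exactly one term produces $P(A,B,C)$ (the term $\psi(t_{13}+t_{23},t_{34})$, through $\partial_\eta$ and $t_{34}\mapsto -A\eta$, or alternatively the $\eta$-coefficient of a term). Two terms should produce $f(e+B,k+C)$ and $f(e,k-A)$, since their first ABC argument becomes $0$ or is killed. The remaining terms should cancel or vanish because their arguments commute or lie in a subalgebra where $\psi$ vanishes. A typical example is $\psi(u,u)$-type degeneracies, which vanish by antisymmetry \eqref{eq:oantisymmetry} combined with $\psi\in\mathcal K$.

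Next I replace $X$ by the free $\Lie(e,k)$. The terms that survive only involve $E$ and $K$, and $\fH\to \Lie(A,B,C)\rtimes X$ is the natural map. I have to check that the identity holds in $\fH$ and not merely in the image. Since both sides of \eqref{eq:oneext-reconstruction} lie in $\Lie(A,B,C)$, on which the map is injective, it is enough to observe that the $\Lie(e,k)$-components agree. That is automatic once the $\Lie(A,B,C)$-projection is taken, because $\fH\to\Lie(e,k)$ kills both sides. Injectivity of $\Phi$ then follows at once. If $f=\Phi(\psi)=0$, the right-hand side vanishes, so $P=0$ and hence $\psi=\iota(P)=0$.

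The main obstacle is the bookkeeping in the middle step. I must verify that the chosen projection really isolates $P(A,B,C)$ with coefficient one and cancels all other contributions. This requires carefully tracking signs from the odd variables, the $\partial_\eta$ terms, and the $\eta$-linear corrections such as $-C\eta$. I would first do the computation at the level of the five triples of ABC-arguments, using the bracket table, before expanding anything.
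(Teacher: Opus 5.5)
Your strategy is the paper's: evaluate the ABC-form pentagon \eqref{eq:P-abc} under $\rho$, set $\eta=0$, sort the five terms, then lift from $\Lie(A,B,C)\rtimes X$ to $\fH$. Your lifting step and the deduction of injectivity are correct. However, the step that carries all the content is only announced ("should produce", "should cancel"), and what you say about it is partly wrong, so there is a real gap. Two ideas are missing.

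(i) The identity $U^2=-\ad_h$ holds as derivations, but $U^2$ is not the element $-h$ of $\wfH$. One has $\rho(t_{12})^2=U^2+C-B^2-[A,B]\eta=A+\kappa-[A,B]\eta$ with $\kappa=h+U^2$. To discard $\kappa$ you need two facts:
\begin{itemize}
\item $\kappa$ is central, which follows from \eqref{eq:derivation-identities} since $U^2=-\ad_h$ and $U,E,\delta$ kill $h$;
\item $P$ has no linear part in weight $\geq 3$.
\end{itemize}

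(ii) Passing to the $\eta^0$-component is a Lie algebra morphism only on the $\partial_\eta$-free subalgebra $\wfH_0$, because $[\partial_\eta,a\eta]=\pm a$. For the same reason, your first-order Taylor expansion in $\eta$ makes no sense for arguments such as $\rho(t_{13}+t_{23})$, which contain $\partial_\eta$. The projection is legitimate because in ABC form every argument $x^2,y,[x,y]$ is $\partial_\eta$-free. Indeed, $t_{14}$ never occurs, and $t_{13}$ enters only through $\rho(t_{13})^2=h$ and $[\rho(t_{13}),\rho(t_{34})]=-A$.

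With these two points in place, set $\eta=0$ and drop $\kappa$. The five terms, with the signs $+,+,-,-,-$ of \eqref{eq:P-abc}, become
\begin{itemize}
\item $P(A,B+E,K+C)$,
\item $P(-A,0,-A)$,
\item $P(B^2,0,0)$,
\item $P(-B^2,E,K-A)$,
\item $P(A,B,C)$.
\end{itemize}

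So $P(A,B,C)$ comes from the last term $\psi(t_{12},t_{23})$, not from $\psi(t_{13}+t_{23},t_{34})$. In that second term $\partial_\eta$ and $-A\eta$ do combine as you expected, but only into the third argument $-A$, which gives $P(-A,0,-A)$. This term and $P(B^2,0,0)$ vanish simply because each lies in the Lie algebra generated by a single even element of weight two. No appeal to antisymmetry is needed, and no $\eta$-coefficient is ever used.

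The two $f$-terms arise because $[A,B+E]=[A,K+C]=0$ and $[B^2,E]=[B^2,K-A]=0$. Hence the even first argument can be set to zero in those terms. This yields $P(A,B,C)=f(E+B,K+C)-f(E,K-A)$ in $\Lie(A,B,C)\rtimes X$, after which your lifting argument to $\fH$ applies verbatim.
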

\begin{proof}
We may assume that $P$ has homogeneous weight at least three.
Recall $h=C-B^2-A$ and $K=[U,E]$ and define
\[
 \kappa=h+U^2.
\]
The identities \eqref{eq:derivation-identities} show that $\kappa$ is
central in $\wfH$: its bracket with $\Lie(A,B,C)\otimes\Lambda(\eta)$
is zero because $U^2=-\ad_h$, and $U,E,\delta,\partial_\eta$
all commute with $\kappa$.
Applying the Lie algebra morphism $\rho\colon \ot_4\to \wfH$ of Lemma \ref{lem:oneext-representation} to the pentagon equation \eqref{eq:P-abc} we obtain the following equation in $\wfH$:
\begin{equation}\label{eq:full-pentagon-evaluation}
\begin{aligned}
0={}&P\bigl(A+\kappa-[A,B]\eta,\ B+E-C\eta,
             \ K+C-[2A-C,B]\eta\bigr)\\
 &+P\bigl(-A+[A,B]\eta,\ -A\eta,\ -A+[A,B]\eta\bigr)\\
 &-P\bigl(B^2+[C,B]\eta,\ -A\eta,\ [A,B]\eta\bigr)\\
 &-P\bigl(\kappa-B^2-[C,B]\eta,\ E-A\eta,
             \ K-A-[A,B]\eta\bigr)\\
 &-P\bigl(A+\kappa-[A,B]\eta,\ B-C\eta,
             \ C-[A-C,B]\eta\bigr).
\end{aligned}
\end{equation}
For example, the squared argument in the first and fifth terms is
\[
 \rho(t_{12})^2
 =U^2+C-B^2-[A,B]\eta=A+\kappa-[A,B]\eta.
\]
Observe that all arguments lie in the Lie subalgebra
\[
 \wfH_0=(\Lie(A,B,C)\otimes\Lambda(\eta))\rtimes X\subset\wfH,
\]
that is, no argument contains a derivative $\partial_\eta$. 
There is a Lie algebra projection
$\pi:\wfH_0\to \Lie(A,B,C)\rtimes X$ by setting $\eta=0$.
Applying this projection to \eqref{eq:full-pentagon-evaluation} and omitting the central summand $\kappa$ we obtain
\begin{equation}\label{eq:pent eval}
\begin{aligned}
0&= P(A,B+E,K+C)+P(-A,0,-A)-P(B^2,0,0)-P(-B^2,E,K-A)-P(A,B,C)
\\
&=P(A,B+E,K+C)-P(-B^2,E,K-A)-P(A,B,C).
 \end{aligned}
\end{equation}
In the first two terms, the first argument of $P$ commutes with the other two. Indeed,
\[
 [A,B+E]=[A,K+C]=0,\qquad
 [B^2,E]=[B^2,K-A]=0.
\]
Consequently these first arguments may be set to zero and \eqref{eq:pent eval} becomes 
\begin{equation}\label{eq:prerecon}
P(A,B,C) = f(E+B,K+C)-f(E,K-A).
\end{equation}
Now note that the right-hand side of \eqref{eq:oneext-reconstruction} takes values in $\Lie(A,B,C)$ since both terms contribute the same component $f(e,k)$ in the summand $\Lie(e,k)$, but with opposite signs. But the $\Lie(A,B,C)$-component of the right-hand side of \eqref{eq:oneext-reconstruction} agrees with the right-hand side of \eqref{eq:prerecon}, so that \eqref{eq:oneext-reconstruction} follows.
\end{proof}

For later use we shall also record an analogous result obtained from the transposed pentagon equation.

\begin{prop}
For $\psi\in\ogrt_1$, write $P=\iota^{-1}\psi$ and
$f=P(0,B,C)$ and set $h=C-B^2-A$. Then in $\fH$ we have the equation
\begin{equation}\label{eq:oneext-reconstruction2}
 P(h,-B,C-A)-P(h,-B,C) = f(e,k-A)-f(e,k),
\end{equation}    
with both sides taking values in $\Lie(A,B,C)\subset\fH$.
\end{prop}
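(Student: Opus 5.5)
We follow the proof of Proposition~\ref{prop:reconstruction}, but apply $\rho$ to the \emph{transposed} pentagon equation. This is the pentagon \eqref{eq:opentagon} with the strand relabelling $i\mapsto 5-i$, i.e.\ $1\leftrightarrow4$, $2\leftrightarrow3$. Since the pentagon is invariant under this relabelling combined with the antisymmetry \eqref{eq:oantisymmetry}, the transposed equation holds for every $\psi\in\ogrt_1$. Concretely, we substitute $t_{ij}\mapsto t_{5-i,5-j}$ into \eqref{eq:P-abc} and rewrite everything in $ABC$ coordinates. By \eqref{eq:S-abc} we may equivalently write $\psi(x,y)=-P(y^2,-x,[x,y])$ if that gives simpler arguments.

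\textbf{Step 1: evaluate the arguments under $\rho$.} We compute the images under $\rho$ of the arguments of the five terms, using the bracket table and \eqref{eq:derivation-identities} from Lemma~\ref{lem:oneext-representation}. Squares of derivation-containing generators are rewritten through the central element $\kappa=h+U^2$, which can then be dropped. The relevant squares are:
\begin{itemize}
\item $\rho(t_{34})^2=0$;
\item $\rho(t_{23})^2=B^2-[B,C]\eta$;
\item $\rho(t_{24})^2=E^2$, where $E^2=\tfrac12[E,E]$ is a derivation;
\item $\rho(t_{14})^2$, which involves $\partial_\eta^2=0$ and brackets such as $[\partial_\eta,(C-B^2)\eta]=C-B^2$ and $[-\partial_\eta,\delta]$.
\end{itemize}
This is where $h=C-B^2-A$ should appear as a first argument, as in \eqref{eq:oneext-reconstruction2}.

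\textbf{Step 2: kill $\eta$ and $\partial_\eta$.} Here the main obstacle arises: the transposed equation may involve $t_{14}$ and $t_{13}$, whose images contain $\partial_\eta$, so we cannot simply set $\eta=0$. Two remedies are available:
\begin{itemize}
\item choose the form of the transposed pentagon, or use the 3-cycle relation, so that only combinations like $t_{13}+t_{14}$ appear, in which $\partial_\eta$ cancels; or
\item project onto a coefficient that is invariant under these contributions.
\end{itemize}
We expect that in the needed combinations, e.g.\ $t_{13}+t_{14}$, the $\partial_\eta$ terms cancel, giving $\delta+(2C-2B^2-A)\eta$. After that, setting $\eta=0$ lands in $\Lie(A,B,C)\rtimes X$.

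\textbf{Step 3: discard commuting first arguments and lift to $\fH$.} We identify the terms whose first argument commutes with the other two, for instance $[B^2,E]=0$ and $[B^2,K-A]=0$, as used before. In those terms the first argument can be set to zero, turning them into evaluations of $f$. The remaining terms should be $P(h,-B,C-A)$ and $P(h,-B,C)$, using $\delta h=0$ and $[h,\cdot]$ relations to simplify. Any remaining derivations in $X$ should then reduce to combinations of $E$ and $K$, so we expect a relation in $\Lie(A,B,C)\rtimes\langle E,K\rangle$ of the form
\[
P(h,-B,C-A)-P(h,-B,C)=f(E,K-A)-f(E,K),
\]
possibly after using Proposition~\ref{prop:reconstruction}, i.e.\ \eqref{eq:prerecon}, to rewrite one term.

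Finally, as at the end of the proof of Proposition~\ref{prop:reconstruction}, we lift to $\fH$ via $E\mapsto e$, $K\mapsto k$. In $f(e,k-A)-f(e,k)$ the $\Lie(e,k)$-components $f(e,k)$ cancel, so both sides lie in $\Lie(A,B,C)$. Their $\Lie(A,B,C)$-components agree with the relation obtained in Step 3, which yields \eqref{eq:oneext-reconstruction2}.
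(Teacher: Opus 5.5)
You have a genuine gap, and Step~2 is where it shows. The relabelling $i\mapsto 5-i$ is a symmetry of the pentagon, so it gives you no new equation.

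\textbf{Why $i\mapsto 5-i$ fails.} Under this relabelling one has $t_{12}\mapsto -t_{34}$, $t_{13}\mapsto -t_{24}$, $t_{23}\mapsto -t_{23}$, $t_{24}\mapsto -t_{13}$ and $t_{34}\mapsto -t_{12}$. Rewrite each term by \eqref{eq:oantisymmetry} in the form $\psi(-a,-b)=-\psi(b,a)$. Then each term of the relabelled equation equals minus a term of \eqref{eq:opentagon} on the same side: the two left-hand terms are interchanged, and so are the first and last right-hand terms. So it is the same element of $\ot_4$. Its $\rho$-image is the identity \eqref{eq:full-pentagon-evaluation} already used for Proposition~\ref{prop:reconstruction}, and setting $\eta=0$ returns only \eqref{eq:pent eval}, i.e.\ \eqref{eq:prerecon}.

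This is also the source of your Step~2 obstacle. Written directly in $P$-coordinates, the relabelled terms $\psi(-t_{34},-t_{23}-t_{13})$ and $\psi(-t_{34}-t_{24},-t_{13}-t_{12})$ have $t_{13}$, without $t_{14}$, in the second slot. Flipping them by antisymmetry just puts you back into the old computation. The terms $P(h,-B,C-A)$, $P(h,-B,C)$ and $f(E,K)$ never appear. Your fallback of deducing \eqref{eq:oneext-reconstruction2} from \eqref{eq:prerecon} is also unsupported. It would require substituting $(A,B,C)\mapsto(h,-B,C-A)$ into an identity that also involves the action of $E$ and $K$, and you give no compatible endomorphism.

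\textbf{The missing idea.} You need a strand permutation that is \emph{not} a symmetry of the pentagon; the paper exchanges only strands $2$ and $3$.

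It also helps to pin down where $\partial_\eta$ actually causes trouble. In $ABC$ coordinates, $\psi(a,b)=P(a^2,b,[a,b])$ sees the first variable only through $a^2$ and $[a,b]$. These are $\partial_\eta$-free even when $a$ involves $t_{13}$; for example $\rho(t_{13})^2=h$. So only the second slot $b$ must avoid $t_{13},t_{14}$, or contain them only as $t_{13}+t_{14}$.

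With $2\leftrightarrow 3$ the second slots are $t_{34}-t_{23}$, $t_{24}$, $t_{24}$, $t_{24}+t_{34}$ and $-t_{23}$. So all arguments lie in $\wfH_0$ and you can set $\eta=0$ immediately. The five terms then become:
\begin{enumerate}
\item[(i)] The first and fifth terms give $P(h,-B,C-A)$ and $-P(h,-B,C)$, because $\rho(t_{13})^2=h$.
\item[(ii)] The genuinely new second term has $a=t_{12}-t_{23}$ and $b=t_{24}$. Since $\rho(t_{12}-t_{23})=U$, it becomes $P(\kappa-h,E,K)$. After dropping $\kappa$ and using $[h,E]=[h,K]=0$, this is $f(E,K)$.
\item[(iii)] The third term gives $-P(B^2,E,0)=-f(E,0)$, which vanishes in weight $\geq 3$.
\item[(iv)] The fourth term gives $-f(E,K-A)$, exactly as before.
\end{enumerate}
With this identity in hand, your final lifting step to $\fH$ goes through as you describe.
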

\begin{proof}
The pentagon equation \eqref{eq:P-abc}, but with strands $2,3$
exchanged, is
\begin{equation}\label{eq:P-abc2}
\begin{aligned}
0={}&
 P(t_{13}^2,t_{34}-t_{23},
   [t_{13},t_{34}]+[t_{12},t_{23}]) +P(t_{12}^2+t_{23}^2-[t_{12},t_{23}],t_{24},
   [t_{12},t_{24}]-[t_{23},t_{24}])\\
&-P(t_{23}^2,t_{24},-[t_{23},t_{24}])-P(t_{12}^2+t_{13}^2-[t_{12},t_{23}],
   t_{24}+t_{34},[t_{12},t_{24}]+[t_{13},t_{34}])\\
&-P(t_{13}^2,-t_{23},[t_{12},t_{23}]).
\end{aligned}
\end{equation}
Applying again the Lie morphism $\rho\colon \ot_4\to \wfH$ gives
\begin{equation}\label{eq:full-transposed-evaluation}
\begin{aligned}
0={}&P\bigl(h,\ -B+(C-A)\eta,\ C-A-[A-C,B]\eta\bigr)+P(\kappa-h,E,K)\\
 &-P\bigl(B^2+[C,B]\eta,\ E,\ [A,B]\eta\bigr)-P\bigl(\kappa-B^2-[C,B]\eta,\ E-A\eta,
             \ K-A-[A,B]\eta\bigr)\\
 &-P\bigl(h,\ -B+C\eta,\ C-[A-C,B]\eta\bigr).
\end{aligned}
\end{equation}
For the second term, one may use
$\rho(t_{12}-t_{23})=U$, so that its first argument is
$U^2=\kappa-h$ and its third argument is $[U,E]=K$.
As in the previous proof, all arguments are in $\wfH_0$.
Setting $\eta=0$ and omitting the central summands $\kappa$ gives
\begin{equation}\label{equ:P2eval2}
\begin{aligned}
0&=P(h,-B,C-A)+P(-h,E,K)-P(B^2,E,0)-P(-B^2,E,K-A)-P(h,-B,C)
\\&=P(h,-B,C-A)+f(E,K) -0 -f(E,K-A) -P(h,-B,C). 
\end{aligned}
\end{equation}
For the second equality we used the commutation relations
\[
 [h,E]=[h,K]=0,\qquad [B^2,E]=[B^2,K-A]=0.
\]
We obtain
\[
 P(h,-B,C-A)-P(h,-B,C)=f(E,K-A)-f(E,K).
\]
Concluding as in the previous proof, \eqref{eq:oneext-reconstruction2} follows.
\end{proof}

\newcommand{\fG}{\mathcal{G}}
\subsection{Two Lie algebra morphisms and the image of \texorpdfstring{$\Phi$}{Phi}}
Let $z$ be a generator of degree one and $c$ of degree two. On $\Lie(z,c)$ define two degree-one derivations $\partial,d$ by
\[
 \partial z=c,\quad \partial c=0,\qquad
 dz=z^2,\quad dc=[z,c].
\]
They satisfy $\partial^2=d^2=[\partial,d]=0$, as is checked on
$z,c$. We use the semidirect product
\begin{equation}\label{eq:G}
 \fG=\Lie(z,c)\rtimes(\Q\partial\oplus\Q d),
\end{equation}
where the second summand is abelian.

\begin{lemma}\label{lem:G-evaluations}
The following assignments on generators define Lie algebra morphisms
$\lambda_0,\lambda_1:\fH\to \fG$:
\begin{equation}\label{eq:G-evaluations}
\begin{array}{c|ccccc}
 &A&B&C&e&k\\\hline
 \lambda_0&z^2&\partial&c&d-\partial&z^2-c\\
 \lambda_1&c+z^2&z&c+2z^2&z-2d&c.
\end{array}
\end{equation}
\end{lemma}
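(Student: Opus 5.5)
Since $\fH$ is a semidirect product of two free Lie algebras, I would first reduce the claim to finitely many identities in $\fG$. As a Lie algebra, $\fH=\Lie(A,B,C)\rtimes\Lie(e,k)$ is the quotient of the free Lie algebra $\Lie(A,B,C,e,k)$ by the relations
\[
[e,X]=E(X),\qquad [k,X]=K(X),\qquad X\in\{A,B,C\}.
\]
These relations on generators suffice. Indeed, in the quotient, $\ad_e$ and $E$ are both derivations of the Lie subalgebra generated by $A,B,C$ (of the same parity) that agree on generators, so they agree everywhere; likewise for $\ad_k$ and $K$. The subalgebra generated by $A,B,C$ is then an ideal, and the quotient surjects onto $\fH$; comparing with the semidirect product gives the isomorphism.

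Consequently a Lie morphism $\fH\to\fG$ is the same as an assignment of degree-preserving values $\lambda(A),\lambda(B),\lambda(C),\lambda(e),\lambda(k)$ satisfying
\[
[\lambda(e),\lambda(X)]=\lambda(E X),\qquad [\lambda(k),\lambda(X)]=\lambda(KX)
\]
for $X=A,B,C$. Here the right-hand sides are computed by substituting $\lambda$ into the Lie words from \eqref{eq:deri def}; this is legitimate because $\lambda$ is already a morphism on the free algebra $\Lie(A,B,C)$.

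The first check is degrees. For $\lambda_0$, $\partial$ and $d-\partial$ have degree $1$, while $z^2$, $c$ and $z^2-c$ have degree $2$. For $\lambda_1$, $z$ and $z-2d$ have degree $1$, while $c+z^2$, $c+2z^2$ and $c$ have degree $2$. All of these match the degrees of $A,B,C,e,k$.

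Next I collect the elementary rules in $\fG$. For an element $\theta\in\Q\partial\oplus\Q d$ and $u\in\Lie(z,c)$ we have $[\theta,u]=\theta(u)$. The basic values are
\[
\partial(z^2)=[c,z],\qquad d(z^2)=[z^2,z]=0,\qquad d(c)=[z,c],\qquad [z,z^2]=0,
\]
where the vanishing follows from the graded Jacobi identity for odd $z$. Also $\partial^2=d^2=[\partial,d]=0$ in $\fG$. In particular, for $\lambda_0$ we get $\lambda_0(B^2)=\tfrac12[\partial,\partial]=\partial^2=0$, and for $\lambda_1$ we get $\lambda_1(B^2)=z^2$.

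With these rules, each of the $2\times 6$ identities reduces to a short computation in $\Lie(z,c)$, possibly after applying one derivation. For example, for $\lambda_0$ and $X=A$ the check is
\[
[d-\partial,z^2]=-[c,z]=[z^2,\partial]=\lambda_0([A,B]),
\]
using $[z^2,\partial]=-\partial(z^2)$. Similarly, for $\lambda_0$ and $X=B$, the requirement $[d-\partial,\partial]=0$ holds because $[\partial,d]=0$ and $[\partial,\partial]=2\partial^2=0$.

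The $k$-relations are the longest, since $K(C)=[A,C]+[A,B^2]$ involves $B^2$. For $\lambda_0$ this term vanishes because $\lambda_0(B^2)=0$. For $\lambda_1$ one has to expand terms such as $[c,c+2z^2]$ and $[c+z^2,z^2]$ and compare them using $[z^2,z^2]=0$ and $[z,z^2]=0$. I expect the main obstacle to be the graded sign bookkeeping in the $\lambda_1$ relations with $e\mapsto z-2d$ and $k\mapsto c$. There the bracket $[z-2d,\,\cdot\,]$ mixes the inner bracket with $z$ and the derivation $d$, and the factor $2$ has to cancel exactly against the $-2B^2$-type contributions. I would carry out these computations carefully in a fixed basis of low-weight Lie words in $z,c$.
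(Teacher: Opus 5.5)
Your proposal is correct and takes the same route as the paper. Both use freeness of the two factors of $\mathcal{H}$ to reduce to the relations $[\lambda(e),\lambda(X)]=\lambda(EX)$ and $[\lambda(k),\lambda(X)]=\lambda(KX)$ for $X=A,B,C$, and then check these by substitution. The paper simply tabulates the values, namely $(w,0,w)$ and $([z^2,c],w,[z^2,c])$ for $\lambda_0$, and $(-w,0,-w)$ and $([c,z^2],-w,2[c,z^2])$ for $\lambda_1$, where $w=[z,c]$.

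The $\lambda_1$ checks you left open do all hold, and they are easier than you expect. There is no $-2B^2$ term in $E$ or $K$; that term occurs only in $U$. The factor $2$ in $z-2d$ cancels simply via $[z,z]=2\,d(z)$ and $[z,c]-2\,d(c)=-[z,c]$.
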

\begin{proof}
The two factors in \eqref{eq:H} are free, so it suffices to verify
the action relations on $A,B,C$. Put $w=[z,c]$. Substitution in
\eqref{eq:deri def} gives
\[
\begin{array}{c|cc}
 &\lambda_i(E(A),E(B),E(C))&\lambda_i(K(A),K(B),K(C))\\\hline
 i=0&(w,0,w)&([z^2,c],w,[z^2,c])\\
 i=1&(-w,0,-w)&([c,z^2],-w,2[c,z^2]).
\end{array}
\]
These are also the brackets of $\lambda_i(e)$ and $\lambda_i(k)$,
respectively, with $(\lambda_i(A),\lambda_i(B),\lambda_i(C))$.
\end{proof}

\begin{prop}\label{prop:oneext-final-image}
For every $\psi\in\ogrt_1$, we have
$\Phi(\psi)\in\Lie([B,C])$.
\end{prop}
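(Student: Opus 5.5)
The plan is to push the two reconstruction identities \eqref{eq:oneext-reconstruction} and \eqref{eq:oneext-reconstruction2} through the morphisms $\lambda_0,\lambda_1\colon\fH\to\fG$ of Lemma~\ref{lem:G-evaluations}. The point is to obtain functional equations for $f=\Phi(\psi)=P(0,B,C)\in\Lie(B,C)$ alone, because the substitutions are chosen so that the $P$-terms become controllable. The target $\fG$ is convenient for two reasons. First, $\partial$ and $d$ span an abelian, square-zero part acting by explicit derivations. Second, $z^2$ is killed by $d$ (indeed $d(z^2)=[z^2,z]=0$), and $z^2$ commutes with $z$ up to $[z,z^2]=0$. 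Consequently many substituted expressions collapse.

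Concretely, applying $\lambda_0$ to \eqref{eq:oneext-reconstruction} gives
\[
P(z^2,\partial,c)=f(d,z^2)-f(d-\partial,-c).
\]
Here $f(d,z^2)$ only retains the linear part of $f$, since $[d,d]=0$ and $[d,z^2]=0$. Applying $\lambda_1$ gives
\[
P(c+z^2,z,c+2z^2)=f(2z-2d,2c+2z^2)-f(z-2d,-z^2).
\]
I will do the same with \eqref{eq:oneext-reconstruction2}, whose left side involves $P$ evaluated at $h$, with $\lambda_0(h)=c-\partial^2-z^2=c-z^2$ and $\lambda_1(h)=c+2z^2-z^2-c-z^2=0$. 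Under $\lambda_1$ the first argument therefore vanishes, so both $P$-terms become values of $f$:
\[
f(-z,z^2)-f(-z,c+2z^2)=f(z-2d,-z^2)-f(z-2d,c).
\]
This is an equation purely in $f$ inside $\fG$, and it is my main source of constraints. I will combine it with the $\lambda_0$-image of \eqref{eq:oneext-reconstruction2}, where $\partial^2=0$ simplifies the $h$-argument, and also with the $\lambda_1$-evaluation above, whose left side can be rewritten via the $\lambda_1$-image of \eqref{eq:oneext-reconstruction2}. Together these should give a linear system satisfied by $f$.

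Next I decompose $\fG=\Lie(z,c)\oplus\Q\partial\oplus\Q d$ and expand $f(z-2d,\,\cdot\,)$ in powers of $d$. Since $d^2=0$ in the relevant sense, and brackets with $d$ act as the derivation $d$, each occurrence of $B$ replaced by $-2d$ contributes a term in which one letter $B$ is replaced by the action of $d$. The $\Lie(z,c)$-component of the equation thus reads $f(z,\cdot)$ plus a first-order ``derivative'' term. I will grade by the number of $c$'s and by weight, and then compare components. The expected conclusion is that $f$ must be annihilated by certain derivations of $\Lie(B,C)$ (schematically, $C\mapsto[B,C]$ type and $B\mapsto C$ type operators, modulo the substitution $C\mapsto C+\text{const}\cdot B^2$). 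The joint kernel of these derivations, intersected with the constraints from $\lambda_0$, should be exactly $\Lie([B,C])$. Using that $\Lie(B,C)=\Q B\oplus\Lie(C,B^2,[B,C],\dots)$ by the same elimination as in Section~\ref{sec:coordinates} should make this kernel computation a statement about free Lie algebras.

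The main obstacle is this last step: showing that the derived linear conditions actually cut $\Lie(B,C)$ down to the two-dimensional $\Lie([B,C])$. The substitutions $z\mapsto z$, $c\mapsto c+2z^2$ and the like are not injective on $\Lie(B,C)$, so information is lost. My first attempt is to use the $\lambda_0$ and $\lambda_1$ equations jointly, viewing one as determining the ``$z^2$-shift'' part of $f$ and the other as the $d$-derivative part. I would then check injectivity of the combined map on the complement of $\Lie([B,C])$ by a leading-term argument in a Hall basis of $\Lie(B,C)$, ordering by the number of $B$'s. If this fails, I would additionally use the antisymmetry \eqref{eq:S-abc} at $A=0$ to kill the remaining words.
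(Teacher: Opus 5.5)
\textbf{There is a genuine gap.} Your evaluations are the right ones, and your $\lambda_1$-image of \eqref{eq:oneext-reconstruction2} is exactly the identity the paper uses. The problem is that it is the only equation in $f$ alone that you obtain, and it is not enough.

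Since $f$ has weight $\geq 3$, it lies in the kernel of $\Lie(B,C)\to\Q B$. So $f=F(B^2,C,[B,C])$ for a unique $F\in\Lie(u,v,w)$. Use $(z-2d)^2=-z^2$, $[z-2d,c]=-[z,c]$, $[z,z^2]=0$, and the fact that $z^2,c,[z,c]$ freely generate a Lie subalgebra of $\Lie(z,c)$. Then your identity is equivalent to $F(u,v+2u,w)=F(-u,v,w)$. This only says that $F$ is fixed by the involution $(u,v,w)\mapsto(-u,v-2u,w)$, that is, $u\mapsto -u$ and $v-u\mapsto v-u$. It has many solutions outside $\Lie(w)$, for example $F=[u,[u,w]]$, i.e.\ $f=[B^2,[B^2,[B,C]]]$.

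Your other equations do not close this gap. The $\lambda_0$- and $\lambda_1$-images of \eqref{eq:oneext-reconstruction} and the $\lambda_0$-image of \eqref{eq:oneext-reconstruction2} all contain $P$ with a nonzero first argument, and you give no way to eliminate it. In particular, the $\lambda_1$-image of \eqref{eq:oneext-reconstruction2} contains only values of $f$, so it cannot be used to rewrite $P(c+z^2,z,c+2z^2)$.

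\textbf{The missing idea} is to feed in the antisymmetry relation \eqref{eq:oantisymmetry}, evaluated inside $\mathcal{G}$ at the odd pair $(z,\partial)$. Since $[z,\partial]=c$ and $\partial^2=0$,
\[
P(z^2,\partial,c)=\psi(z,\partial)=-\psi(-\partial,-z)=-P(0,-z,c)=-f(-z,c).
\]
So the $\lambda_0$-image of \eqref{eq:oneext-reconstruction} becomes $f(-z,c)=f(d-\partial,-c)$. With $(d-\partial)^2=0$ and $[d-\partial,-c]=-[z,c]$, this reads $F(u,v,w)=F(0,-v,w)$, which removes the $u$-dependence. The $\lambda_1$-identity then gives $F_0(v+2u,w)=F_0(v,w)$, hence $F\in\Lie(w)$. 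Your fallback of using antisymmetry ``at $A=0$ to kill remaining words'' does not identify this substitution or its role.

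\textbf{Your mechanism for the final step is also flawed.} $f(z-2d,c)$ is not $f(z,c)$ plus a first-order term in $d$. Copies of $d$ inserted in different branches of a bracket survive; only nested insertions die. For example, $[[d,c],[d,c]]=[[z,c],[z,c]]\neq 0$. The reparametrization through $F$ is what makes the $d$- and $\partial$-substitutions tractable. It turns the functional equations into identities between Lie polynomials in free generators, with no Hall-basis leading-term argument needed.

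As written, the decisive step, cutting $\Lie(B,C)$ down to $\Lie([B,C])$, is left open. The only constraint on $f$ alone that you derive does not suffice for it.
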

\begin{proof}
Take $\psi$ homogeneous of weight $n\geq3$, and define $P=\iota^{-1}\psi$ and $f(B,C)=P(0,B,C)$
as in Proposition~\ref{prop:reconstruction}.
Applying $\lambda_0$ to \eqref{eq:oneext-reconstruction} we obtain 
\[
P(z^2,\partial, c) = f(d,z^2) - f(d-\partial,-c)
= -f(d-\partial,-c)
\]
The first term vanishes because $d$ commutes with $z^2$ and $d^2=0$.
Antisymmetry \eqref{eq:oantisymmetry} furthermore gives
\[
 P(z^2,\partial,c)
 =\psi(z,\partial)=-\psi(-\partial,-z)=
 -P\bigl((-\partial)^2,-z,[-\partial,-z]\bigr)=-P(0,-z, c)=-f(-z,c).
\]
Consequently
\begin{equation}\label{eq:G-first-identity}
 f(-z,c)=f(d-\partial,-c).
\end{equation}

Since the kernel of $\Lie(B,C)\to\Q B$ is free on $B^2,C,[B,C]$ and $f$ lies in that kernel,
there is a unique $F\in\Lie(u,v,w)$ (with $|u|=|v|=2,|w|=3$) so that
\begin{equation}\label{eq:fF}
 f(B,C)= F(B^2,C,[B,C]).
\end{equation}
Analogously, the elements $z^2,c,[z,c]$ freely generate a Lie subalgebra of
$\Lie(z,c)$. Since
\[
 (d-\partial)^2=0,\qquad [d-\partial,-c]=-[z,c],
\]
equation \eqref{eq:G-first-identity} is equivalent to 
\[
F(z^2, c, -[z,c]) = F(0, -c, -[z,c]),
\]
and after replacing the free generators $(z^2,c,[z,c])$ by $(u,v,-w)$ to
\begin{equation}\label{eq:quotient-skew}
  F(u,v,w)= F(0,-v,w).
\end{equation}
In particular, $F$ is independent of $u$.

Next apply $\lambda_1$ to \eqref{eq:oneext-reconstruction2}. Since $\lambda_1(h)=0$ we obtain
\[
 f(-z,z^2)-f(-z,c+2z^2)=f(z-2d,-z^2)-f(z-2d,c).
\]
Here $(z-2d)^2=-z^2$. Thus $f(-z,z^2)$ and $f(z-2d,-z^2)$ take
values in Lie algebras generated by a single odd element, and
vanish in weight $n\geq3$. We conclude that
\begin{equation}\label{eq:G-second-identity}
 f(-z,c+2z^2)=f(z-2d,c).
\end{equation}

Since $[z-2d,c]=-[z,c]$ and $[z,z^2]=0$, this is equivalently written using $F$ from \eqref{eq:fF} as 
\[
F(z^2,c+2z^2, -[z,c])= F(-z^2, c, -[z,c]).
\]
Replacing the free generators $z^2,c,[z,c]$ again by $u,v,-w$ turns this into
\begin{equation}\label{eq:quotient-flipped}
  F(u,v+2u,w)= F(-u,v,w).
\end{equation}
Write $F(u,v,w)= F_0(v,w)$ using
\eqref{eq:quotient-skew}. Setting $v=0$ and replacing $u$ by $u/2$ in
\eqref{eq:quotient-flipped} gives
\[
  F_0(u,w)= F_0(0,w),
\]
so $F_0$ depends only on $w$.
This proves $f\in\Lie([B,C])$.
\end{proof}

\begin{proof}[Proof of Theorem~\ref{thm:inj}]
Proposition~\ref{prop:reconstruction} proves injectivity, and
Proposition~\ref{prop:oneext-final-image} identifies the required
bound on the image.
\end{proof}


\section{Graph cohomology and the proof of Theorem \ref{thm:main gc}}
\label{sec:graph-cohomology}

\subsection{Recollection: The Kontsevich graph complex}

Fix $n\in \Z$. The Kontsevich graph complex $\GC_n$ is a differential graded vector space of $\Q$-linear series of connected graphs with at least trivalent
vertices, and orientation.
More precisely, we have a direct product decomposition of dg vector spaces 
\[
\GC_n = \prod_{g \geq 2}
\GC_n^{g\lp}.
\]
The finite-dimensional subcomplex $\GC_n^{g\lp}$ is defined to be the space of linear combinations of pairs $(\gamma,o)$, with $\gamma$ a connected graph of loop order $g$ with at least trivalent vertices, and $o$ a choice of one of two possible orientations for that graph. The orientation datum is only relevant to fix signs, depends on $n$, and need not be discussed further for the purposes of this paper. One imposes two relations on the pairs $(\gamma,o)$ spanning $\GC_n^{g\lp}$: Isomorphic oriented graphs are identified, and opposite orientations are identified up to a sign. If $\gamma$ has $v$ vertices and $e$ edges, so that $g=e-v+1$, the pair $(\gamma,o)$ is placed in cohomological degree 
\[
|\gamma| = n(v-1) -(n-1)e.
\]
The differential $\delta$ on $\GC_n$ is vertex splitting. There is furthermore a compatible Lie bracket that makes $\GC_n$ into a dg Lie algebra.
See \cite[Section~3]{WillwacherGrt} or \cite{BW} for more details.
We also recall that in the construction the choice of $n$ only matters up to parity -- more precisely, we have that 
\[
H^k(\GC_{n+2j}^{g\lp}) \cong H^{k+2gj}(\GC_n^{g\lp}).
\]

Similarly, we consider the hairy graph complexes $\HGC_{m,n}$ that were introduced by Turchin \cite{Turchin2010}, \cite{AroneTurchin2015}, see also \cite{FTW17} for the conventions used here.
Again $\HGC_{m,n}$ is a dg vector space spanned by graphs with orientations, but now the graphs may have hairs, such as in the following picture.
\[
\begin{tikzpicture}[scale=.75,baseline=-.65ex, rotate=90]
\node[int] (a) at (0,1) {};
\node[int] (b) at (-1,-.6) {};
\node[int] (c) at (1,-.6) {};
\node[int] (d) at (0,-.05) {};
\node (h) at (0,1.8) {};
\draw (a)--(b)--(c)--(a) (a)--(d)--(b) (d)--(c) (a)--(h);
\end{tikzpicture}
\]
The differential is again vertex splitting. It respects the loop and hair number. 
For the purposes of this paper only the $1$-hair subcomplexes of fixed loop number $g$ 
\[
\HGC_{m,n}^{1\text{-hair},g\lp} \subset \HGC_{m,n}
\]
will be relevant. They are independent of the index $m$, which only becomes relevant for graphs with multiple hairs. The cohomological degree of an oriented 1-hairy graph $(\Gamma,o)$ in $\HGC_{m,n}^{1\text{-hair},g\lp}$ of loop order $g=e-v$ with $v$ vertices (excluding the hair vertex) and $e$ edges (including the hair edge) is 
\[
|\Gamma|=n v -(n-1)e.
\]
The hairless graph complex $\GC_n$ and the one-hair subcomplex of $\HGC_{m,n}$ are related by the following result.

\begin{prop}[Turchin--Willwacher {\cite[Theorem~1 and Section~8.2, footnote~10]{TW17}}]
\label{prop:one-hair-comparison}
For each $m$, $n$ there is a morphism of complexes $\GC_n\to \HGC_{m,n}[1]$ given by summing over all ways of adding a hair at a vertex, that induces an injective map on cohomology
\begin{equation}\label{eq:graph-hair-injection}
H^k(\GC_n^{g\lp})\hookrightarrow 
H^{k+1}(\HGC_{m,n}^{1\text{-hair},g\lp})
\end{equation}
for any $k$ and $g\geq2$.
\end{prop}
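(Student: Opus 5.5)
The plan is to construct a chain map going back from the one-hair complex to a hairless complex, and to show that its composite with the hair-adding map $h\colon \GC_n\to\HGC_{m,n}[1]$ is, in each fixed degree and loop order, a nonzero multiple of the identity.

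\emph{The backward map.} The natural candidate is the map $F$ that forgets the hair. If $\Gamma$ is a one-hair graph, then the vertex carrying the hair has total valence at least $3$, so after the hair is deleted it has valence at least $2$. Hence $F$ does not land in $\GC_n$ itself. It lands in the larger complex $\GC_n^{\geq 2}$ of connected graphs whose vertices are at least bivalent.

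For this target I will use the result from \cite{WillwacherGrt} that the inclusion $\GC_n\subset\GC_n^{\geq 2}$ is a quasi-isomorphism up to the loop classes. Those loop classes live in loop order $1$ only. Since we work at loop order $g\geq 2$, the map $H(\GC_n^{g\lp})\to H(\GC_n^{\geq 2,g\lp})$ is an isomorphism.

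\emph{The composite.} The composite $F\circ h$ sends a graph $\gamma$ with $v$ vertices to $v\cdot\gamma$, because $h$ sums over the $v$ possible attachment points of the hair. Multiplication by $v$ is not a chain map on all of $\GC_n$, since the differential increases $v$. However, on the part of fixed loop order $g$ and fixed degree $k$ the vertex number is determined: we have $e=g+v-1$ and $k=n(v-1)-(n-1)e$, which forces $v=k-(n-1)g+1$. So if $F$ is a chain map, then on $H^k(\GC_n^{g\lp})$ the composite
\[
H^k(\GC_n^{g\lp})\to H^{k+1}(\HGC_{m,n}^{1\text{-hair},g\lp})\to H^k(\GC_n^{\geq2,g\lp})\cong H^k(\GC_n^{g\lp})
\]
is multiplication by a fixed positive integer. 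The only exception is the degenerate case $v=0$, which does not occur. This composite is therefore invertible, and injectivity of the first map follows.

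\emph{Main obstacle.} The step I expect to be hardest is verifying that $F$ really is a chain map, with the correct signs and orientation conventions, and after the degree shift $[1]$.

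The differential on the hairy side splits vertices. At the hair vertex, the hair may go to either of the two new vertices, subject to the at-least-trivalence condition. After forgetting the hair, these terms should match the splittings in $\GC_n^{\geq2}$ of the formerly-hair vertex, including those splittings that create a bivalent vertex. There are two further points to handle:
\begin{itemize}
\item Splittings in $\GC_n^{\geq2}$ that produce a univalent vertex are absent from the target complex by definition.
\item In $\HGC$ there may be correction terms attaching new antennas or hairs; these must either cancel or be killed by $F$.
\end{itemize}
I would check this term by term, first at a bivalent vertex and then at a vertex of valence at least $3$.

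If the term-by-term check fails, the fallback is to filter both complexes by the number of vertices. I would then compare the spectral sequences, whose first pages are computed by the same forgetful map modulo splittings. This still yields the multiplication-by-$v$ argument on the associated graded, which is enough for injectivity on the part of fixed loop order.
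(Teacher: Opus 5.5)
Your proof has a genuine gap: the backward map $F$ that deletes the hair cannot be a chain map, and your own computation shows this. It is not a sign or convention problem that a term-by-term check could repair. The hair-adding map $h$ is a chain map, and $F\circ h$ is multiplication by the vertex number $v$. If $F$ were a chain map, $F\circ h$ would be one too. But $\delta$ raises $v$ by exactly one, so for any $\gamma$ with $\delta\gamma\neq 0$ you get $\delta(Fh\gamma)=v\,\delta\gamma$ while $Fh(\delta\gamma)=(v+1)\,\delta\gamma$. Fixing degree and loop order does not help, because the differential runs from degree $k$ to degree $k+1$ inside the same loop order, and $v$ jumps there.

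Concretely, two things go wrong. When the hair vertex $w$ is split, the hair can go to either new vertex, so after deletion every splitting of $w$ is counted twice instead of once. There are also terms in which the hair moves onto a new trivalent vertex subdividing an edge at $w$; after deletion these become bivalent insertions that nothing cancels. So $F$ induces no map on cohomology, and ``the composite is multiplication by $v$ on $H^k$'' has no content. Your fallback fails for the same reason. Since $\delta$ raises the vertex number by exactly one, the associated graded of the vertex filtration has zero differential, and the $E_1$ page is the complex itself with $d_1=\delta$. An argument on the associated graded therefore says nothing about cohomology.

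The repair is to weight the deletion by the valence of the hair vertex. Let $d_w$ be the number of non-hair half-edges at $w$, let $\Gamma^\circ$ be $\Gamma$ with the hair removed, and put $L(\Gamma)=(d_w-2)\,\Gamma^\circ$.
\begin{itemize}
\item Suppose $w$ is split into two vertices receiving $s$ and $d_w-s$ of its old half-edges. The two placements of the hair contribute weights $(s-1)+(d_w-s-1)=d_w-2$. This is exactly the coefficient of that splitting in $\delta L(\Gamma)$.
\item The hair-migration terms have $s=1$ and get weight $0$. So do all graphs with $d_w=2$.
\item Hence $L$ is a chain map into $\GC_n$ itself, and you need neither $\GC_n^{\geq 2}$ nor the loop classes.
\end{itemize}
Now $L\circ h(\gamma)=\sum_w(\mathrm{val}(w)-2)\,\gamma=(2e-2v)\,\gamma=2(g-1)\,\gamma$. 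This is constant on each loop-order piece, since $\delta$ preserves $g$, and it is nonzero exactly when $g\geq 2$. That is where the hypothesis $g\geq 2$ genuinely enters, rather than only through the loop classes. You get a left inverse on cohomology, hence injectivity. The paper itself only cites \cite{TW17} for this statement. A minor slip: the vertex count is $v=k+(n-1)g+1$, not $k-(n-1)g+1$.
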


\subsection{Recollection: One-hair graphs and the Harrison complex}
Let us briefly recall how the hairy graph complexes arise in operad theory.
Let $\Pois_n^c$ be the $n$-Poisson cooperad, which for $n\geq 2$ is the cohomology cooperad of the little $n$-disks operad.
Then it has been shown in \cite[Theorems~11--12 and Proposition~6.1]{FTW17} that, for $n\geq m\geq2$, the derived mapping spaces in the category of Hopf $\Lambda$-cooperads $\Map^h(\Pois_n^c,\Pois_m^c)$ are weakly equivalent to the Maurer--Cartan space of the hairy graph complex $\HGC_{m,n}$. Several precursors of that statement have been shown by Turchin \cite{Turchin2010}, Arone--Turchin \cite{AroneTurchin2015} and the second author \cite{WillwacherGrt}. 
To understand the relation to our paper, recall that the odd Drinfeld--Kohno Lie algebra $\ot_r$ is a graded Lie algebra, with the cohomological grading equal to the weight grading. The totality of the $\ot_r$ for varying $r$ forms an operad in graded Lie algebras $\ot$. This is a member of the family of Drinfeld--Kohno operads $\mathfrak p_n$ of \cite[Theorem~4.6]{FTW17}, more precisely $\ot=\mathfrak p_1$ and $\ot_r=\mathfrak p_1(r)$.\footnote{We note that \cite{FTW17} uses homological conventions while we use cohomological conventions, so the degrees are reversed.}
The Chevalley complex of the graded dual $C(\ot^*)$ comes with a quasi-isomorphism into the Poisson cooperad, $C(\ot^*)\to\Pois_1^c$. This resolution is used there in the computation of the derived operadic mapping spaces.

For this paper we will not need to recall the full operadic formalism, but rather take a minimal shortcut.
Let $N\ot_r$ be the intersection of
the kernels of the strand-forgetting maps $\ot_r\to\ot_{r-1}$.
Thus normalization retains the Lie monomials involving every strand.
Let $\Com^\vee$ denote the Koszul dual cooperad of $\Com$, whose
arity-$r$ part is concentrated in degree $1-r$. Consider the normalized
weight-$k$ Harrison complex
\begin{equation}\label{eq:graph-harrison}
 C_k=\prod_{r\geq2}\operatorname{Hom}_{S_r}
 \bigl(\Com^\vee(r),(N\ot_r)^{(k)}[1]\bigr),
\end{equation}
with the differential $d_H$ induced by the strand-doubling maps of $\ot$, see
\cite[Section~6.2]{WillwacherGrt}. Here ``$[1]$'' lowers cohomological
degree by one. The arity-$r$ term therefore has total degree
\begin{equation}\label{eq:graph-harrison-degree}
 q=k+r-2.
\end{equation}
The Harrison complex above is quasi-isomorphic to the one-hair graph complex: this is the first Hodge summand of
Turchin's comparison \cite[Proposition~9.5, Theorem~10.1 and
Remark~4.8]{Turchin2010}. On the graph side,
Hodge degree counts hairs. On the cosimplicial side, its first summand
is the normalized Harrison complex, up to conventional regrading.
An explicit model for the same comparison is given in
\cite[Theorem~2.2 and Sections~5.1--5.2, especially
equations~(5.6)--(5.7)]{AroneTurchin2015}; its Hodge-degree-one
summand gives \eqref{eq:graph-harrison} after a regrading.
We just recall the following result from the cited works.

\begin{thm}[Turchin {\cite[Proposition~9.5, Theorem~10.1 and Remark~4.8]{Turchin2010}}]\label{thm:HarrisonHGC}
    For any $m$, $k\geq1$ and $q$ there is an isomorphism
    \[
    H^q(C_k) \cong H^q(\HGC_{m,1}^{1\text{-hair},k\lp}).
    \]
\end{thm}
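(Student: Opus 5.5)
Since this is a result quoted from \cite{Turchin2010} (with an explicit model in \cite{AroneTurchin2015}), the plan is to reconstruct the comparison from a graphical model of the odd Drinfeld--Kohno Lie algebras, not to reprove it from scratch. The first step describes $\ot_r$ graphically, in the spirit of Bar-Natan's Jacobi diagrams. Weight $k$ elements of $U\ot_r$ correspond to unitrivalent graphs whose univalent vertices lie on $r$ labelled strands, modulo the STU/IHX relations with odd-type signs. The Lie part $\ot_r$ is spanned by diagrams that remain connected after removing the strands. The normalized piece $N\ot_r$ then consists of connected diagrams meeting every strand. Under this dictionary, a generator $t_{ij}$ is a chord between strands $i$ and $j$, and the weight $k$ is the first Betti number of the diagram once the strands are collapsed to a point. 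This is exactly the loop order of the one-hair graph obtained by gluing all strands into a single ``hair vertex''.

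The second step writes the Harrison complex \eqref{eq:graph-harrison} as a graph complex. Taking $\Hom_{S_r}(\Com^\vee(r),-)$ antisymmetrizes over strands with the degree shift $1-r$. The Harrison differential $d_H$, built from strand doubling, becomes the operation of splitting an external vertex into two and summing over the ways of redistributing its incident edges. After dualizing, $C_k$ is identified with a complex of graphs with $r\geq 2$ external vertices and internal trivalent vertices, modulo IHX. The degree formula \eqref{eq:graph-harrison-degree}, $q=k+r-2$, is checked against the one-hair degree formula with $n=1$, i.e.\ $|\Gamma|=e'$ in terms of edges. Here one should use the vertex-splitting differential on the internal part to replace the IHX quotient by a resolution, i.e.\ pass from trivalent diagrams to graphs with at least trivalent internal vertices.

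The third step, which I expect to be the main obstacle, identifies this external-vertex complex with $\HGC_{m,1}^{1\text{-hair},k\lp}$. There is a map which contracts all external vertices to a single vertex and attaches one hair. I would filter by the number of internal vertices and study the associated graded. Fixing the internal graph, the remaining complex is a complex of set partitions of the half-edges attached to external vertices, with the Harrison differential. This is a Harrison (Lie-cooperad/Com) complex of a free commutative object. By the Hodge decomposition, its homology is concentrated in the first Hodge summand, giving exactly the one-hair attachment: all legs land at one hair vertex. This is the content of \cite[Proposition~9.5, Theorem~10.1]{Turchin2010}, and the collapsing of the spectral sequence follows from this concentration. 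The delicate points are the orientation signs for $n=1$, since the edges and the $t_{ij}$ are odd, and the fact that the one-hair complex is independent of $m$ (\cite[Remark~4.8]{Turchin2010}). Both would be handled by comparing with the explicit formulas (5.6)--(5.7) of \cite{AroneTurchin2015} in Hodge degree one.
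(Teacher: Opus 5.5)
The paper gives no argument for this statement. It imports it from \cite{Turchin2010} as the Hodge-degree-one part of Turchin's comparison, with \cite{AroneTurchin2015} as an explicit model. Your closing appeal to the same results is therefore what actually carries your proposal, and the reconstruction you build around it does not hold up as written.

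Your first step uses the wrong model. Words in the $t_{ij}$ are \emph{horizontal} chord diagrams. Unitrivalent diagrams on $r$ strands modulo STU/IHX instead form the (odd analogue of the) string-link space $\mathcal A(\uparrow^r)$, which is much larger than $U\ot_r$. Already classically, the weight-three tree with leaves $1,1,2,2$ is a nonzero primitive of $\mathcal A(\uparrow^2)$ meeting both strands, although $\mathfrak t_2$ is one-dimensional. Nor can vertex splitting ``resolve the IHX quotient'' once diagrams contain cycles, because graph cohomology is not concentrated in trivalent degree. For instance, the one-hair classes relevant here sit in degree $k+1$, whereas trivalent one-hair graphs have degree $2k-1$. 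A correct model is the complex of internally connected Kontsevich graphs with $r$ external vertices, at which legs are unordered; its cohomology is dual to $\ot_r$. Your third step, with its set partitions of legs, only makes sense for a model of this kind, not for the strand model of your first step.

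Your second step misdescribes the Harrison complex. $\Com^\vee(r)$ has dimension $(r-1)!$, so $\Hom_{S_r}(\Com^\vee(r),-)$ is not an antisymmetrization. For $r=3$ one gets $V_2$, and the proof of Lemma~\ref{lem:graph-harrison} spells out the resulting conditions: antisymmetry under one transposition and vanishing cyclic sum. A totally antisymmetric $\phi$ has cyclic sum $3\phi$, so antisymmetric cochains meet Harrison cochains only in zero. Antisymmetrizing picks out the top Eulerian component, i.e.\ the wrong Hodge summand, and the 3-cycle relation \eqref{eq:othree-cycle} would never arise. A smaller slip: for $n=1$ the one-hair degree is $|\Gamma|=v$, not an edge count.

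In your third step, ``concentration in the first Hodge summand'' is vacuous, since $C_k$ already is that summand. What collapses the external vertices is a different fact. For a fixed internal part, the complex of partitions of its legs is (dual to) the Koszul complex of $\Com$; this is the same fact that puts the Harrison homology of a free commutative algebra in bottom degree. You would then still need a separate quasi-isomorphism from graphs with one external vertex of arbitrary valence to genuine one-hair graphs.

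These inputs are exactly what the cited comparison supplies. So your proposal, like the paper, establishes the statement only by citation, and its intermediate steps should be corrected or dropped.
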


\subsection{The bottom degree and the odd Grothendieck--Teichm\"uller Lie algebra}
\begin{lemma}\label{lem:graph-harrison}
For $k\geq3$, the complex $C_k$ of \eqref{eq:graph-harrison} is concentrated in cohomological degrees $q\geq k+1$. 
The space of weight-$k$ cocycles in $C_k$ of degree $q=k+1$ is isomorphic to $\ogrt_1^{(k)}$.
\end{lemma}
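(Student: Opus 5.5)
Since $q=k+r-2$, the arity-$r$ summand of $C_k$ sits in degree $k+r-2$. The plan is therefore to show that the arity-$2$ summand vanishes for $k\geq 3$, and then to analyse the lowest remaining arity, $r=3$, which sits in degree $k+1$. The whole argument runs parallel to \cite[Section~6]{WillwacherGrt}.

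\textbf{Step 1 (concentration).} In arity $2$ the Lie algebra $\ot_2$ is generated by the single odd element $t_{12}$. It is therefore two-dimensional, spanned by $t_{12}$ and $t_{12}^2$, and lives in weights $1$ and $2$. Hence $(N\ot_2)^{(k)}=0$ for $k\geq3$, and the arity-$2$ summand vanishes. It follows that $C_k$ is concentrated in degrees $q\geq k+1$.

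\textbf{Step 2 (identifying arity $3$).} The degree-$(k+1)$ part is $\operatorname{Hom}_{S_3}(\Com^\vee(3),(N\ot_3)^{(k)}[1])$. Here $\Com^\vee(3)$ is one-dimensional, and the $S_3$-action on it is the sign representation twisted appropriately for degree $-2$. Next I will use the standard decomposition $\ot_3=\Lie(t_{13},t_{23})\oplus\Q c$, where $c=t_{12}+t_{13}+t_{23}$ is central. Under it, $N\ot_3$ in weight $\geq3$ is identified with those Lie words in $t_{13},t_{23}$ that involve both letters. By the kernel-of-projection description recalled in Section~\ref{sec:coordinates}, these are exactly the elements of weight $\geq 3$ in $\Lie(x,y)$ that are killed by setting either variable to zero. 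Degree-$(k+1)$ elements of $C_k$ are thus elements $\psi(x,y)=\psi(t_{13},t_{23})$ that transform by a prescribed sign under $S_3$. Writing out the generators of $S_3$, the transposition $(12)$ should yield the antisymmetry relation \eqref{eq:oantisymmetry}, and the $3$-cycle should yield the hexagon relation \eqref{eq:othree-cycle}. The latter uses the standard trick: the $S_3$-invariance combined with the centrality of $c$ gives the cyclic sum relation. Keeping track of the signs will require care, since the generators are odd and the Koszul signs differ from the even case.

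\textbf{Step 3 (cocycle condition).} Since there is nothing in degree $k$, every degree-$(k+1)$ element is trivially not a coboundary; we only need the cocycle condition. The differential $d_H$ sends the arity-$3$ component to arity $4$. It is an alternating sum of the strand-doubling maps $\ot_3\to\ot_4$, applied to $\psi(t_{12},t_{23})$ and followed by normalization. Expanding this sum yields precisely the pentagon expression \eqref{eq:opentagon}. Its terms should be
\[
\psi(t_{12},t_{23}+t_{24}),\quad \psi(t_{13}+t_{23},t_{34}),\quad \psi(t_{23},t_{34}),\quad \psi(t_{12}+t_{13},t_{24}+t_{34}),\quad \psi(t_{12},t_{23}),
\]
with the outer two terms coming from the insertion and forgetting of the extremal strands. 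The normalization (projection to $N\ot_4$) is harmless: when the pentagon holds for a $\psi$ that vanishes under all strand forgettings, the full relation follows from its normalized version. This is because each strand-forgetting of the pentagon expression reduces to trivially satisfied identities in $\ot_3$, given that $\psi\in N\ot_3$.

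The main obstacle I expect is the sign bookkeeping in Steps 2 and 3. One must match the twisted $S_3$-equivariance with the exact signs in \eqref{eq:oantisymmetry} and \eqref{eq:othree-cycle}, and check that the Harrison differential of \cite[Section~6.2]{WillwacherGrt} in the odd setting produces \eqref{eq:opentagon} with the signs shown. I would first test both identifications on $\sigma_3$, and on the vanishing weight-$1$ and weight-$2$ cases of Example~\ref{ex:lowweight}, to pin down the conventions.
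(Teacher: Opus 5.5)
Your Step 1 agrees with the paper. Step 2, however, has a genuine error that would make the identification fail. The cooperad $\Com^\vee$ has arity-$r$ part of dimension $(r-1)!$: it is $\Lie(r)^*$ up to a sign twist and a shift. So $\Com^\vee(3)$ is the two-dimensional irreducible representation $V_2$, not a one-dimensional one.

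This matters. Any one-dimensional representation of $S_3$ is trivial on $3$-cycles, so your arity-$3$ cochains would satisfy $\tau\phi=\phi$ for the cyclic permutation $\tau$. The cyclic sum in \eqref{eq:othree-cycle} would then be $3\phi$, not $0$. Your space would therefore meet $\ogrt_1^{(k)}$ only in zero; for example, $\sigma_3$ is excluded, since it satisfies the hexagon and is nonzero. No ``trick'' can turn cyclic invariance into vanishing of the cyclic sum.

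The paper's identification $C_k^{k+1}\cong(V_2\otimes(N\ot_3)^{(k)})^{S_3}$ works for a structural reason. $V_2$ is the cyclic $\Q[S_3]$-module with one generator $v$ and relations $(1+s)v=0$ and $(1+\tau+\tau^2)v=0$, where $s$ is a transposition. Indeed, $\Q[S_3]/\Q[S_3](1+s)\cong\mathrm{sgn}\oplus V_2$, and the central element $1+\tau+\tau^2$ acts by $3$ on $\mathrm{sgn}$ and by $0$ on $V_2$. Hence $\Hom_{S_3}(V_2,W)$ is exactly the space of $\phi\in W$ with $s\phi=-\phi$ and $\phi+\tau\phi+\tau^2\phi=0$. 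With $\phi=\psi(t_{12},t_{23})$ and $s=(13)$, these two conditions are literally \eqref{eq:oantisymmetry} and \eqref{eq:othree-cycle}. The hexagon is therefore one of the two defining relations of $V_2$, not a consequence of equivariance plus some extra input. The real obstacle is this representation-theoretic point, not sign bookkeeping.

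The decomposition you invoke is also false in the odd setting. The element $c=t_{12}+t_{13}+t_{23}$ is not central in $\ot_3$. For odd elements, $[t_{13},t_{12}]+[t_{23},t_{12}]=[t_{12},t_{13}+t_{23}]=0$, which leaves $[c,t_{12}]=[t_{12},t_{12}]\neq0$. In fact $\ot_3$ has no nonzero central element of weight one. What is true is the semidirect product $\ot_3=\Lie(t_{12},t_{23})\rtimes\Lie(t_{13})$, with $[t_{13},t_{12}]=[t_{13},t_{23}]=-[t_{12},t_{23}]$. Since $\Lie(t_{13})$ lives only in weights $1$ and $2$, every element of weight $k\geq3$ is uniquely of the form $\psi(t_{12},t_{23})$. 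It is automatically normalized, because $\ot_2$ vanishes in weight $k$.

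So your conclusion about the weight-$\geq3$ part survives, but for this reason, and the centrality-based route to the hexagon is unavailable. You should also use the parametrization $\psi(t_{12},t_{23})$ throughout; you switch from $\psi(t_{13},t_{23})$ in Step 2. That is the parametrization in which the reflection $(13)$ and the $3$-cycle reproduce the paper's relations verbatim. Your Step 3 is fine: the cocycle condition is the arity-four Harrison differential, which is \eqref{eq:opentagon}.
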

\begin{proof}
Since $\ot_2$ is the free graded Lie algebra on one odd generator,
it has no part of weight $k\geq3$. Hence the arity-two term of $C_k$ vanishes,
and \eqref{eq:graph-harrison-degree} gives $q\geq k+1$, with equality
precisely in arity $r=3$.

For the second statement, let us first write explicitly
\[
C_k^{k+1}
\cong
\Hom_{S_3}(\Com^\vee(3),(N\ot_3)^{(k)})
\cong
(V_2\otimes(N\ot_3)^{(k)})^{S_3},
\]
where $V_2$ is the irreducible two-dimensional $S_3$-representation;
the first isomorphism here forgets the degree shifts.
Put differently, elements of $C_k^{k+1}$ are Lie expressions
$\phi(t_{12},t_{13},t_{23})\in\ot_3$ of weight $k$, involving every
strand, such that
\begin{align*}
0&=\phi(t_{12},t_{13},t_{23})
  +\phi(t_{32},t_{31},t_{21}),\\
0&=\phi(t_{12},t_{13},t_{23})
  +\phi(t_{23},t_{21},t_{31})
  +\phi(t_{31},t_{32},t_{12}).
\end{align*}
The closedness condition $d_H\phi=0$ is
\begin{align*}
0={}&\phi(t_{23},t_{24},t_{34})
-\phi(t_{13}+t_{23},t_{14}+t_{24},t_{34})\\
&+\phi(t_{12}+t_{13},t_{14},t_{24}+t_{34})
-\phi(t_{12},t_{13}+t_{14},t_{23}+t_{24})
+\phi(t_{12},t_{13},t_{23}).
\end{align*}
This is the arity-four Harrison differential, in the same convention
as in \cite[Section~6.2, proof of Lemma~6.4]{WillwacherGrt}.

It remains to identify these expressions with Lie expressions in two
variables. Put $x=t_{12}$, $y=t_{23}$ and $z=t_{13}$. The braid
relations present $\ot_3$ as
\[
\ot_3=\Lie(x,y)\rtimes\Lie(z),
\qquad [z,x]=[z,y]=-[x,y],
\]
where $z$ acts on the free graded Lie algebra $\Lie(x,y)$ by the
unique odd derivation with these values. Since $\Lie(z)$ is
concentrated in weights one and two, every weight-$k$ element of
$\ot_3$, for $k\geq3$, has a unique expression
$\phi=\psi(t_{12},t_{23})$, with $\psi\in\Lie(x,y)_k$.
It is automatically normalized, since every strand-forgetting map
takes values in $\ot_2$, whose weight-$k$ part is zero.
Under this identification the two symmetry relations become the antisymmetry relation
\eqref{eq:oantisymmetry} and the 3-cycle relation \eqref{eq:othree-cycle}, and the
closedness condition becomes the pentagon \eqref{eq:opentagon}. This proves the
assertion.
\end{proof}

\subsection{Proof of Theorem ~\ref{thm:main gc}}
For odd $n$, changing $n=1$ to $n$ shifts the degree of a one-hair graph of loop order $k$ by $-(n-1)k$. Combining Theorem \ref{thm:HarrisonHGC}, Lemma \ref{lem:graph-harrison} and Theorem \ref{thm:main} we obtain the following immediate corollary.
\begin{cor}\label{cor:HGC}
    For any odd $n$, any $m$ and any $k\geq 3$ there are isomorphisms
    \[
    H^{k+1-(n-1)k}
    (\HGC_{m,n}^{1\text{-hair},k\lp})
    \cong \ogrt_1^{(k)}
    \cong 
    \begin{cases}
        \Q &\text{for $k=3,6$} \\
        0 & \text{otherwise}
    \end{cases}.
    \]
\end{cor}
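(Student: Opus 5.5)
This is a chain of three results already established, followed by a degree shift. I will first treat the case $n=1$ and then transfer to arbitrary odd $n$.

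For $n=1$ and $k\geq 3$, Theorem~\ref{thm:HarrisonHGC} gives
\[
H^{k+1}(\HGC_{m,1}^{1\text{-hair},k\lp})\cong H^{k+1}(C_k).
\]
By Lemma~\ref{lem:graph-harrison}, the complex $C_k$ is concentrated in degrees $q\geq k+1$. Hence nothing of degree $k$ can hit degree $k+1$, and $H^{k+1}(C_k)$ equals the space of degree $(k+1)$ cocycles. The same lemma identifies that space with $\ogrt_1^{(k)}$. Theorem~\ref{thm:main} then says $\ogrt_1^{(k)}$ is $\Q$ for $k=3,6$ and $0$ otherwise. The only care needed here is that the coboundary space really vanishes; this is exactly the lower bound $q\geq k+1$, which rests on $\ot_2$ having no weight $\geq 3$ part.

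For general odd $n$, I use the degree formula for one-hair graphs, $|\Gamma|=nv-(n-1)e$. Replacing $n=1$ by $n$ changes the degree by $(n-1)(v-e)=-(n-1)k$, since $k=e-v$ is the loop order. For odd $n$ the orientation conventions agree with those for $n=1$, because only the parity of $n$ matters, just as recalled for $\GC_n$. So $\HGC_{m,n}^{1\text{-hair},k\lp}$ is isomorphic to $\HGC_{m,1}^{1\text{-hair},k\lp}$ as a complex, with degrees shifted by $-(n-1)k$. It is also independent of $m$, since $m$ only matters for graphs with several hairs. Transporting the $n=1$ isomorphism along this shift gives the claimed identification in degree $k+1-(n-1)k$.

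The only point that needs justification beyond bookkeeping is this regrading: the identification of the odd-$n$ one-hair complex with the $n=1$ complex, compatible with the differential and signs. I would handle it the same way as the parity statement $H^k(\GC_{n+2j}^{g\lp})\cong H^{k+2gj}(\GC_n^{g\lp})$ recalled earlier. The orientation data for $n$ and $n+2j$ coincide, and vertex splitting is defined uniformly, so the graphwise degree shift is an isomorphism of complexes.
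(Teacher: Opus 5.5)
Your proposal is correct and follows the same route as the paper: the degree shift by $-(n-1)k$ from $n=1$ to odd $n$, combined with Theorem~\ref{thm:HarrisonHGC}, Lemma~\ref{lem:graph-harrison} and Theorem~\ref{thm:main}. You also spell out a step the paper leaves implicit, namely that $H^{k+1}(C_k)$ equals the space of cocycles because $C_k$ vanishes below degree $k+1$.
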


Finally, we can finish the proof of our second main result.
\begin{proof}[Proof of Theorem~\ref{thm:main gc}]
By Proposition~\ref{prop:one-hair-comparison} and Corollary \ref{cor:HGC} we conclude
\begin{equation}\label{eq:graph-upper-bound}
 \dim H^k(\GC_1^{k\lp})\leq\dim\ogrt_1^{(k)}
 \qquad(k\geq3).
\end{equation}
By direct low-loop-order computation \cite[Section~4.2 and Figure~1]{BW}
it is known that equality holds in the only possibly nonzero cases \(k=3,6\). 
Concretely, the loop order $k=3$-class is represented by the tetrahedron graph $K_4$, which in $\GC_1$ is of degree 3.
Brown--Hu--Panzer prove that its self-bracket is
nonzero in $H^6(\GC_1^{6\lp})\cong H^{-6}(\GC_3^{6\lp})$: for the explicit graph homology
cycle $X$ in \cite[Figure~4]{BHP26}, they compute, in their orientation
and pairing conventions,
\begin{equation}\label{eq:tetrahedron-pairing}
 \langle[K_4,K_4],X\rangle=384\neq0
 \qquad\text{\cite[Remark~6.17]{BHP26}}.
\end{equation}
It follows that both $[K_4]$ and its self-bracket are nonzero.

By Theorem~\ref{thm:main}, the assignment $\sigma_3\mapsto[K_4]$
extends uniquely to a morphism of graded Lie algebras
\[
 \ogrt_1\longrightarrow H(\GC_1).
\]
Its image lies on the diagonal on which weight (loop order) and cohomological degree agree. The upper bound
\eqref{eq:graph-upper-bound} and Theorem~\ref{thm:main} bound the
dimension of the diagonal by one in loop orders three and six and
by zero in every other loop order $k\geq3$.
Equation~\eqref{eq:tetrahedron-pairing} shows that these bounds are
attained by the images of $\sigma_3$ and $\{\sigma_3,\sigma_3\}$.

There are no connected graphs with all vertices at least trivalent
in loop orders zero and one. At loop order two, the inequality
$3v\leq2e$ gives $v\leq2$, so degree two in $\GC_1$, which would
require $v=3$, is absent as well. The corresponding weights of
$\ogrt_1$ vanish trivially in weight zero and by
Example~\ref{ex:lowweight} in weights one and two. This proves the
theorem in all loop orders.
\end{proof}

\begin{rem}\label{rem:graph-vanishing}
For $k\geq3$ the known vanishing ranges are
\begin{align*}
 H^j(\GC_1^{k\lp})&=0
 &&\text{if }j<k\text{ or }j>2k-3,\\
 H^q\bigl(\HGC_{m,1}^{1\text{-hair},k\lp}\bigr)&=0
 &&\text{if }q<k+1\text{ or }q>2k-1.
\end{align*}
The lower one-hair bound follows from the Harrison model above,
and \eqref{eq:graph-hair-injection} implies the lower hairless bound.
The hairless range is also recorded in \cite[Lemma~1.4]{Wil19}.
The upper bounds hold on chains: trivalence gives $3v\leq2e$ and
$3v+1\leq2e$, respectively. The restriction $k\geq3$ excludes the
theta class in loop order two, which has degree one in $\GC_1$ and
degree two after adding a hair.
\end{rem}

\bibliographystyle{alpha}
\bibliography{refs}

\end{document}